\def\moverlay{\mathpalette\mov@rlay}
\def\mov@rlay#1#2{\leavevmode\vtop{%
   \baselineskip\z@skip \lineskiplimit-\maxdimen
   \ialign{\hfil$\m@th#1##$\hfil\cr#2\crcr}}}
\newcommand{\charfusion}[3][\mathord]{
    #1{\ifx#1\mathop\vphantom{#2}\fi
        \mathpalette\mov@rlay{#2\cr#3}
      }
    \ifx#1\mathop\expandafter\displaylimits\fi}
\def\VR{\kern-\arraycolsep\strut\vrule &\kern-\arraycolsep}
\def\vr{\kern-\arraycolsep & \kern-\arraycolsep}
\newcommand{\be}{\begin{enumerate}}
\newproof{pf}{Proof}
\newtheorem{theorem}{Theorem}[section]
\newtheorem{prop}{Proposition}[section]
\newtheorem{lemma}[prop]{Lemma}
\newtheorem{definition}[prop]{Definition}
\newtheorem{rmk}{Remark}
\newtheorem{obs}{Observation}
\newtheorem{ex}{Example}
\DeclareMathOperator{\PEnd}{\underline{End}}
\newcommand{\Hom}{\operatorname{Hom}}
\newcommand{\Ext}{\operatorname{Ext}}
\newcommand{\ZZ}{\mathbb Z}
\newcommand{\C}{\mathcal{C}}
\tikzset{join/.code=\tikzset{after node path={%
\ifx\tikzchainprevious\pgfutil@empty\else(\tikzchainprevious)%
edge[every join]#1(\tikzchaincurrent)\fi}}}
\tikzset{>=stealth',every on chain/.append style={join},
         every join/.style={->}}
\tikzstyle{labeled}=[execute at begin node=$\scriptstyle,
\tikzset{join/.code=\tikzset{after node path={%
\ifx\tikzchainprevious\pgfutil@empty\else(\tikzchainprevious)%
edge[every join]#1(\tikzchaincurrent)\fi}}}
\begin{document}
\title{Universal deformation rings of modules for  generalized Brauer tree algebras of polynomial growth}

\author[1]{David C. Meyer}
\author[2]{Roberto C. Soto}
\author[3]{Daniel J. Wackwitz-contact author}

\address[1]{Reed College, Mathematics \& Statistics Department, Portland, OR 97202: davidmeyer@reed.edu}
\address[2]{California State University, Fullerton, Mathematics Department, 800 N State College Blvd, Fullerton, CA 92831: rcsoto@fullerton.edu}
\address[3]{University of Wisconsin - Platteville, Mathematics Department, 1 University Plaza, Platteville, WI 53818: wackwitzd@uwplatt.edu}


\begin{abstract}

Let $k$ be an arbitrary field, $\Lambda$ be a $k$-algebra and $V$ be a $\Lambda$-module.  When it exists, the universal deformation ring $R(\Lambda,V)$ of $V$ is a $k$-algebra whose local homomorphisms to $R$ parametrize the lifts of $V$ up to $R\otimes_k \Lambda$, where $R$ is any complete, local commutative Noetherian $k$-algebra with residue field $k$.  Symmetric special biserial algebras, which coincide with Brauer graph algebras, can be viewed as generalizing the blocks of finite type $p$-modular group algebras.  Bleher and Wackwitz classified the universal deformation rings for all modules for symmetric special biserial algebras with finite representation type.  In this paper, we begin to address the tame case.  Specifically, let $\Lambda$ be any 1-domestic, symmetric special biserial algebra.  By viewing $\Lambda$ as generalized Brauer tree algebras and making use of a derived equivalence, we classify the universal deformation rings for those $\Lambda$-modules $V$ with stable endomorphism ring isomorphic to $k$.  The latter is a natural condition, since it guarantees the existence of the universal deformation ring $R(\Lambda,V)$.
\end{abstract}

\begin{keyword}
Brauer graph \sep special biserial \sep representation theory \sep domestic representation type \sep universal deformation ring
\MSC 16G10
\end{keyword}

\maketitle
\setcounter{tocdepth}{1}

\section{Introduction}


Let $k$ be an arbitrary field, $\Lambda$ a finite-dimensional $k$-algebra and $V$ a finitely generated $\Lambda$-module. It is natural to wonder if it is possible to lift $V$ to a module for $R\otimes_k \Lambda$, where $R$ is a complete local commutative Noetherian $k$-algebra with residue field $k.$ From Bleher and V{\'e}lez-Marulanda \cite[Proposition 2.1]{bleher2012velez} we know that if $\Lambda$ is self-injective and the stable endomorphism ring of $V$ is isomorphic to $k,$ then there exists a complete local commutative Noetherian $k$-algebra $R(\Lambda, V)$ which is universal in the sense that each lift of $V$ up to such an algebra comes from a unique local homomorphism from $R(\Lambda,V)$. That is, if $R$ is a complete local commutative Noetherian $k$-algebra with residue field $k$, the local homomorphisms from $R(\Lambda,V)$ to $R$ parametrize the isomorphism classes of all lifts $V$ up to $R\otimes_k \Lambda$.  The ring $R(\Lambda, V)$ is the universal deformation ring of $V$, and in the above sense it has much to say about to which algebras $V$ can be lifted.

In the case that $G$ is a finite group, $k$ has positive characteristic and $\Lambda$ is the group ring $kG$, the question of lifting modules has a long tradition. Indeed, not only are lifts of $V$ to $k$-algebras studied, but also lifts to arbitrary complete local commutative Noetherian rings with residue field $k$. This work has contributed to a better understanding of the connections between representations of groups in characteristic $p$ and characteristic 0. For example, Green \cite{green1959lifting}  showed that if $k$ is the residue field of a ring of $p$-adic integers $\mathcal{O}$ and if there are no non-trivial 2-extensions of a finitely generated $kG$-module $V,$ then $V$ can be lifted to $\mathcal{O}.$ This led Auslander, Ding, and Solberg \cite{auslander1993lifting} to consider more general lifting problems of algebras over Noetherian rings. Then, as a consequence of his work on the lifts of tilting complexes, Rickard \cite{rickard1991lifting} generalized Green's result to modules for arbitrary finite rank algebras over complete local commutative Noetherian rings. 

More recently, much has been done to classify the universal deformation rings of modules with stable endomorphism ring isomorphic to $k$, for many particular special biserial algebras.  Special biserial algebras were first studied by Gelfand and Ponomarev \cite{gel1968indecomposable}, and appear in the characteristic $p$ representation theory of finite groups \cite{alperin1993local}, as the representation-finite blocks of the group algebra $kG$ are Morita equivalent to special biserial algebras arising from Brauer trees with one distinguished vertex \cite{dade1966blocks, janusz1969indecomposable, kupisch1968projektive, alperin1993local}.  While special biserial algebras are usually of infinite representation type, their modules can be studied systematically since all nonprojective indecomposable modules correspond to either strings or bands.  In \cite{velezspecialbiserial, velezspecialbiserial2}, Vélez-Marulanda and his collaborators study the universal deformation rings for string modules for two families of special biserial algebras.  Bleher and Talbott \cite{shannondihedral} work with modules for a related class of algebras, the algebras of dihedral type with polynomial growth. While these algebras are not special biserial, they are closely related in that they become special biserial after quotienting out their socles.  In \cite{bleher2019wackwitz}, Bleher and Wackwitz classify the universal deformation rings for modules for both Brauer tree and Nakayama algebras, including those with arbitrary stable endomorphism rings.  We will build on these results, focusing on a different extension of Brauer tree algebras.  In this paper, we consider the case when $\Lambda$ is a domestic generalized Brauer tree algebra over an algebraically closed field $k$ of arbitrary characteristic.  Brauer graph algebras are given combinatorially by a graph and a labeling of its vertices (see \cite{schroll2018notes} for a more comprehensive introduction to Brauer graph algebras).  We study a subclass of these algebras, the generalized Brauer tree algebras of polynomial growth. These algebras are defined by properties of their corresponding graphs, but they coincide exactly with the 1-domestic symmetric, special biserial algebras \cite{roggenkamp24biserial, schroll2015trivial}.   Our main results in this paper apply to any one of these algebras and to any module with a stable endomorphism ring isomorphic to $k$, including band modules.

We summarize our results below; for more precise statements see Propositions \ref{exceptional}, \ref{nonperiodic}, and \ref{homogeneous}. Note that details on the components of the stable Auslander-Reiten quiver of Brauer tree algebra, $\Lambda,$ referenced below can be found in Section \ref{brauer}.

\begin{theorem} \label{mainthm} Let $k$ be an algebraically closed field of arbitrary characteristic, and let $\Lambda$ be a generalized Brauer tree algebra with $e$ edges which has two vertices of multiplicity 2, and all other vertices have multiplicity one. Let $\mathfrak{C}$ be a connected component of the stable Auslander-Reiten quiver of $\Lambda$, and let $V$ be a module belonging to $\mathfrak{C}$ with 
$\underline{\mathrm{End}}_\Lambda(V)\cong k$.
\begin{enumerate}[(i)]
  \item If $\mathfrak{C}$ is an exceptional tube, then $R(\Lambda, V)$ is isomorphic to $k$ or $k[[t]]$.
  \item If $\mathfrak{C}$ is a non-periodic component and $e \geq 2$, then $R(\Lambda,V)$ is isomorphic to $k$ or $k[[t]]/(t^2)$. If $e=1$, then $R(\Lambda,V) \cong k[[t_1,t_2]]/(t_1^2-t_2^2, t_1t_2)$.
  \item If $\mathfrak{C}$ is a homogeneous tube of rank one, then $R(\Lambda,V) \cong k[[t]]$.
\end{enumerate}
\label{Brauer star theorem}

\end{theorem}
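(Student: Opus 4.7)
The plan is to split the argument along the trichotomy in the statement, treating the three types of components of the stable Auslander--Reiten quiver of $\Lambda$ separately; these correspond precisely to Propositions \ref{exceptional}, \ref{nonperiodic}, and \ref{homogeneous}. Since $\Lambda$ is symmetric (hence self-injective) and $\underline{\End}_\Lambda(V) \cong k$ by hypothesis, Bleher--Velez \cite[Prop.~2.1]{bleher2012velez} already guarantees that $R(\Lambda, V)$ exists in every case, so the content of the theorem lies in identifying the ring explicitly.

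Two general tools would guide every case. First, the stable syzygy invariance $R(\Lambda, V) \cong R(\Lambda, \Omega V)$, valid for self-injective algebras, which lets me replace $V$ by any convenient representative of its $\Omega$-orbit within its AR-component. Second, the standard presentation
\begin{equation*}
R(\Lambda, V) \;\cong\; k[[t_1, \dots, t_r]]/J, \qquad r = \dim_k \Ext^1_\Lambda(V, V),
\end{equation*}
whose relations $J$ are detected by the obstruction map into $\Ext^2_\Lambda(V, V)$. Because $\Lambda$ is special biserial, every indecomposable non-projective module is either a string or a band module, so both Ext groups admit explicit combinatorial formulas in terms of overlaps of (in)verse strings, which makes the required computations tractable.

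For part (i), modules in an exceptional tube are $\Omega$-periodic; using the description of such tubes for a generalized Brauer tree algebra with two multiplicity-$2$ vertices (Section \ref{brauer}), I would enumerate representative string modules at each level of the tube and directly compute $\Ext^1_\Lambda(V,V)$. The outcome is that the tangent space is either zero, giving $R(\Lambda, V) \cong k$, or one-dimensional with unobstructed single parameter, giving $R(\Lambda, V) \cong k[[t]]$. For part (iii), the homogeneous tubes of rank one are populated by band modules, for which one computes $\dim_k \Ext^1_\Lambda(V, V) = 1$ and then checks that the canonical one-parameter self-deformation is unobstructed, yielding $R(\Lambda, V) \cong k[[t]]$.

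Part (ii) is the main obstacle. When $e \geq 2$, modules in non-periodic components sit in a $\ZZ A_\infty^\infty$-shaped family and, after reducing by $\Omega$ to a finite list of representative strings, one finds $\dim \Ext^1 \le 1$ with any nonzero self-extension obstructed at second order, producing $k$ or $k[[t]]/(t^2)$. The genuinely delicate case is $e = 1$: here $\Lambda$ is very small and the tangent space becomes two-dimensional. I would compute $\Ext^1_\Lambda(V,V)$ and $\Ext^2_\Lambda(V,V)$ from a minimal projective resolution of a chosen representative $V$, exhibit two independent first-order deformations, and analyze the obstruction map $\Ext^1_\Lambda(V,V)^{\otimes 2} \to \Ext^2_\Lambda(V,V)$ (equivalently, the relevant Massey products) to extract the relations $t_1^2 - t_2^2$ and $t_1 t_2$. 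Verifying that this presentation is exact, rather than only an upper bound, would require constructing an actual lift of $V$ over $k[[t_1,t_2]]/(t_1^2 - t_2^2,\, t_1 t_2)$ and checking its versality directly, which I expect to be the hardest step of the entire argument.
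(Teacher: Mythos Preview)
Your outline matches the paper's structure closely: the same three-case split via Propositions \ref{exceptional}, \ref{nonperiodic}, and \ref{homogeneous}, the same use of $\Omega$-invariance to reduce to finitely many representatives in each component, and the same computation of $\dim_k \Ext^1_\Lambda(V,V)$ to bound $R(\Lambda,V)$ from above as a quotient of $k[[t_1,\dots,t_r]]$. The genuine methodological difference is in how the relations are pinned down. You propose to use obstruction theory---the map $\Ext^1 \otimes \Ext^1 \to \Ext^2$ and Massey products---to read off $J$, especially in the $e=1$ non-periodic case. The paper never computes $\Ext^2$ at all; instead it writes down an explicit lift over the candidate ring (for instance the rank-one free module over $k[[t_1,t_2]]/(t_1^2-t_2^2, t_1t_2)$ with $\alpha=[t_1]$, $\delta=[t_2]$ when $e=1$), and then shows directly that any attempted lift to a strictly larger quotient of $k[[t_1,t_2]]$ forces the quiver relations of $\Lambda$ to push $t_1^2-t_2^2$ and $t_1t_2$ back into the defining ideal. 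Your approach is more conceptual and would generalize more readily; the paper's is more elementary and avoids any homological algebra beyond $\Ext^1$, at the cost of an ad hoc matrix argument for each representative.

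One small slip worth flagging: the non-periodic component is of type $\mathbb{Z}\tilde{A}_{e,e}$, not $\mathbb{Z}A_\infty^\infty$; indeed the absence of $\mathbb{Z}A_\infty^\infty$ components is exactly what characterizes the domestic case (cf.\ Erdmann--Skowro\'nski, Theorem 2.1(iv), as quoted in Section \ref{brauer}). This does not affect your strategy, since you still reduce correctly to a finite list of $\Omega$-orbit representatives.
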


Theorem \ref{mainthm} pertains to any 1-domestic symmetric special biserial algebra, since these algebras are precisely those that correspond to a graph with the stated properties when viewed as Brauer graph algebras (see \cite{schroll2018notes}).  This paper is organized as follows. In Section \ref{udr}, we introduce the universal deformation ring of a finitely generated $\Lambda$-module $V$.  In Section \ref{brauer}, we assume $k$ is algebraically closed, and we review generalized Brauer tree algebras briefly, highlighting the results that will be instrumental in our analysis. In Section \ref{StableEndRing}, we determine which modules have stable endomorphism ring isomorphic to $k$, and use these results to prove Theorem \ref{mainthm} in Section \ref{MainResults}.

\section{Universal deformation rings}
\label{udr}

Universal deformation rings were introduced by Mazur to systematically study the $p$-adic lifts of finite-dimensional irreducible mod $p$ representations of profinite Galois groups \cite{mazur}.  In this paper, we consider only lifts to $R\otimes_k \Lambda$, where $R$ is a complete local commutative Noetherian $k$-algebra, as formulated by Bleher and V{\'e}lez-Marulanda \cite{bleher2012velez}.  For a more extensive treatment, on universal deformation rings, see \cite{mazur}, \cite{bleher2012velez} or \cite{deSmit}.  The modules we consider are unital left $\Lambda$-modules, which are finite-dimensional over $k$, and $\Lambda$ itself is a finite-dimensional $k$-algebra.

Let  $k$ be a field of arbitrary characteristic and let $\hat{\mathcal{C}}$ be the category whose objects are tuples $(R, {\pi}_R)$, where $R$ is a complete local commutative Noetherian $k$-algebra with residue field $k$, and ${\pi}_R$ is a projection to the residue field.  If $\Lambda$ is a finite-dimensional $k$-algebra and $V$ is a finitely generated $\Lambda$-module, a \emph{lift} of $V$ over an object $R$ in $\hat{\mathcal{C}}$ is a pair $(M,\phi)$, where $M$ is a finitely generated $R\otimes_k\Lambda$-module that is free over $R$, and $\phi$ is an isomorphism of $R\otimes_k\Lambda$-modules 
$$k\otimes_R M\xrightarrow{\phi} V.$$
Thus, a lift of $V$ to $R$ is an $R\otimes_k\Lambda$-module $M$ that is isomorphic to $V$ when one extends scalars to $k\Lambda \cong \Lambda$.  We say two lifts $(M,\phi)$ and $(M'{\phi}')$ are isomorphic, if there is an $R\otimes_k\Lambda$-module isomorphism $f:M\to M'$ with $\phi={\phi}'\circ (id\otimes f)$.  A deformation of $V$ over $R$ is the isomorphism class of a lift of $V$ to $R$, and the set of all deformations (to $R$) is denoted by $\mathrm{Def}_\Lambda(V,R)$.

The \emph{deformation functor} $\hat{F}_V$ is a functor from $\hat{\mathcal{C}}$ to $ \mathrm{Sets}$, which categorically encodes information about all the lifts of $V$ up to $R$, for all $R \in \hat{\mathcal{C}}$.  At the level of objects, the deformation functor sends $R \in \hat{\mathcal{C}}$ to  $\mathrm{Def}_\Lambda(V,R) \in \textrm{Sets}.$  That is, to say, each ring $R$ is associated by the deformation functor to the the set of isomorphism classes of lifts of $V$ up to $R$.  If there exists an object $R(\Lambda,V)$ in $\hat{\mathcal{C}}$, such that $R(\Lambda,V)$ \emph{represents} the functor $\hat{F}_V$ in the sense that $\hat{F}_V$ is naturally isomorphic to $\mathrm{Hom}_{\hat{\mathcal{C}}}(R(\Lambda,V),-)$, we call $R(\Lambda,V)$ the \emph{universal deformation ring} of $V$.  Thus, if $V$ has a universal deformation ring $R(\Lambda,V)$, then for every $R \in \hat{\mathcal{C}}$, every isomorphism class of a lift of $V$ to $R$ comes from a unique local ring homorphism from $R(\Lambda,V)$ to $R$.  In this sense, universal deformation rings contain information about all lifts of $V$ up to local rings in $\hat{\mathcal{C}}$. It was shown in (\cite[Proposition 2.1]{bleher2012velez}) that every finitely generated $\Lambda$-module $V$ with stable endomorphism ring, $\PEnd_{\Lambda}(V),$ isomorphic to $k$ has a universal deformation ring, and that Morita equivalence preserve universal deformation rings (\cite[Proposition 2.5]{bleher2012velez}). Let $\Omega$ denote the syzygy functor and suppose that $\Lambda$ is self-injective. Then  we have the following result:


\begin{theorem} \label{isomk}
(\cite[Theorem 2.6]{bleher2012velez}) Let $\Lambda$ be a finite dimensional self-injective $k$-algebra, and suppose $V$ is a finitely generated $\Lambda$-module whose stable endomorphism ring, $\PEnd_{\Lambda}(V)$, is isomorphic to $k.$
 \begin{itemize}
   \item[(i)] The module $V$ has a universal deformation ring $R(\Lambda, V).$
   \item[(ii)] If $P$ is a finitely generated projective $\Lambda$-module, then $\PEnd_{\Lambda}(V \oplus P)) \cong k$ and $R(\Lambda, V) \cong R(\Lambda, V \oplus P).$
   \item[(iii)] If $\Lambda$ is moreover a Frobenius algebra, then $\PEnd_{\Lambda}(\Omega(V)) \cong k$ and $R(\Lambda, V)  \cong R(\Lambda, \Omega(V)).$
 \end{itemize}
\end{theorem}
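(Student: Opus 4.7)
The plan is to dispatch (ii) first, then establish (i) via Schlessinger's representability criteria, and finally deduce (iii) by exploiting the fact that the syzygy functor is an autoequivalence on the stable category of a Frobenius algebra. For (ii), the stable endomorphism ring identity is immediate: writing $\End_{\Lambda}(V\oplus P)$ as a $2\times 2$ matrix of Hom groups, every entry involving $P$ consists of morphisms factoring through the projective $P$, so it vanishes in the stable quotient, while the $(V,V)$-block yields $\PEnd_{\Lambda}(V)\cong k$. For the deformation rings, any $R\in\hat{\mathcal{C}}$ is complete local, so the idempotent in $\End_{\Lambda}(V\oplus P)$ projecting onto $V$ lifts to an idempotent of $\End_{R\Lambda}(\widetilde{M})$ for any lift $\widetilde{M}$; hence $\widetilde{M}$ splits as $\widetilde{M}_V\oplus\widetilde{M}_P$. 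Since $P$ is projective, $R\otimes_k P$ is the unique lift of $P$ up to isomorphism, yielding a natural bijection $\mathrm{Def}_{\Lambda}(V\oplus P, R)\cong\mathrm{Def}_{\Lambda}(V,R)$ that identifies the two universal deformation rings.

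For (i), the weak deformation functor $\bar{F}_V$ satisfies Schlessinger's conditions (H1)--(H2) because infinitesimal lifts glue along square-zero extensions via the biadditivity of $\Ext^{1}$, while (H3) holds since the tangent space $\bar{F}_V(k[\epsilon])$ is canonically identified with the finite-dimensional $k$-space $\Ext^{1}_{\Lambda}(V,V)$. The remaining criterion (H4), required for universality rather than just versality, is where the hypothesis $\PEnd_{\Lambda}(V)\cong k$ together with self-injectivity enters: as indicated in the excerpt, this combination identifies weak deformations with strict deformations, so the representable weak functor is precisely the deformation functor, producing $R(\Lambda,V)$.

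For (iii), since $\Lambda$ is Frobenius the syzygy functor $\Omega$ is an autoequivalence of the stable module category, immediately giving $\PEnd_{\Lambda}(\Omega V)\cong \PEnd_{\Lambda}(V)\cong k$, so part (i) produces $R(\Lambda,\Omega V)$. To obtain the ring isomorphism, I would construct mutually inverse natural transformations $\hat{F}_V\to\hat{F}_{\Omega V}$ by sending a lift $M$ of $V$ to the kernel $N$ of a minimal $R\Lambda$-projective cover $Q\twoheadrightarrow M$: the short exact sequence $0\to N\to Q\to M\to 0$ has $N$ free over $R$ (since $Q$ and $M$ are, and the sequence splits as $R$-modules) and $k\otimes_R N\cong \Omega V$, while Frobenius duality via the cosyzygy supplies the reverse. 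Naturality in $R$ then forces an isomorphism $R(\Lambda,V)\cong R(\Lambda,\Omega V)$.

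The main obstacle is the H4 step in (i): one must show that endomorphisms of $V$ which factor through projectives lift to genuine automorphisms of any deformation $M$, so that the marking $\phi$ in a lift $(M,\phi)$ contributes no extra equivalence-class information. This relies crucially on self-injectivity, which forces projective and injective modules to coincide and thereby controls how projective-through factorizations deform over Artinian scalars; without it, the stable endomorphism ring hypothesis would not suffice to bridge weak and strict deformations, and universality (as opposed to mere versality of a hull) would fail.
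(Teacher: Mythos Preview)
The paper does not supply its own proof of this theorem: it is quoted verbatim as \cite[Theorem 2.6]{bleher2012velez} and no argument is given here, so there is no proof in the present paper to compare your proposal against.

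That said, your outline is a faithful sketch of the argument in the cited source. Part (ii) via idempotent lifting over complete local $R$ and uniqueness of the projective lift $R\otimes_k P$ is exactly the mechanism used there; part (i) via Schlessinger's (H1)--(H4), with the stable-endomorphism hypothesis entering at (H4) to collapse weak and strict deformations, is the standard route; and part (iii) via $\Omega$ as a stable autoequivalence, transporting lifts through $R\Lambda$-projective covers, is again the approach of Bleher--V\'elez. Your identification of the ``main obstacle'' is accurate: the crux is showing that any $\Lambda$-endomorphism of $V$ factoring through a projective lifts to something in the radical of $\End_{R\Lambda}(M)$ (hence to a unit after adding the identity), which is precisely where self-injectivity is used. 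There is no genuine gap in your sketch relative to what the original reference does.
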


\begin{rmk}
When $\Lambda$ and $V$ satisfy the hypothesis of Theorem \ref{isomk}, then in particular, $V$ has a universal deformation ring $R(\Lambda, V).$ By \cite[Proposition 2.1]{bleher2012velez}, the tangent space $t_V$ of the deformation function $\hat{F}_V$ which is defined as $t_V = \hat{F}_V(k[\epsilon]/(\epsilon^2)),$ is isomorphic to $\Ext^1_{\Lambda}(V, V)$ as a $k$-vector space. Since $R(\Lambda, V)$ represents $\hat{F}_V$, then $$\Ext^1_{\Lambda}(V, V) \cong t_V \cong \Hom_{\hat{C}}(R(\Lambda, V), k[\epsilon]/(\epsilon^2)) \cong \Hom_k(\mathfrak{m}_{\Lambda, V}/\mathfrak{m}^2_{\Lambda, V},k),$$ where $\mathfrak{m}_{\Lambda, V}$ denotes the maximal ideal of $R(\Lambda, V).$ Thus, if $\Ext^1_{\Lambda}(V, V)$ has $k$-dimension $r,$ then $R(\Lambda, V)$ is isomorphic to a quotient algebra of the power series algebra $k[[t_1, \ldots, t_r]]$ in $r$ commuting variables where $r$ is minimal with this property.
\end{rmk}

\section{Domestic Generalized Brauer Tree Algebras}
\label{brauer}

In this section we consider domestic generalized Brauer tree algebras. For a more detailed description of Brauer graph algebras see \cite{schroll2018notes}.  In all that follows, let $k$ be an algebraically closed field of arbitrary characteristic. We recall that an algebra $\Lambda$ is \emph{domestic} if there is a finite number of $\Lambda$-$k[x]$-bimodules $M_i$ which are finitely generated free right $k[x]$-modules such that all but a finite number of isomorphism classes of indecomposable $\Lambda$-modules of dimension $n$, for every dimension $n \geq 1,$ are of the form $M_i \otimes_{k[x]} V$ for some $i$ and some indecomposable $k[x]$-module $V$ \cite{ringeltame}. If the minimal numbers of such bimodules is $d$, then $\Lambda$ is $d$-domestic. In \cite{erdmann1992auslander}, Erdmann and Skowro{\'n}ski described the stable Auslander-Reiten quiver of a special biserial algebra of domestic representation type.  Then, in \cite{duffield2018auslander}  Duffield was able to give a more detailed  description of the components of the Auslander-Reiten quiver for these algebras. For convenience we restate parts of these results here.

\begin{theorem}
\cite[Theorem 2.1]{erdmann1992auslander}  Let $\Lambda$ be a self-injective special biserial algebra. The following are equivalent:
 \begin{enumerate}[(i)]
    \item $\Lambda$ is representation-infinite domestic.
    \item $\Lambda$ is representation-infinite of polynomial growth.
    \item $\prescript{}{s} \Gamma_{\Lambda}$ has a component of the form $\ZZ \tilde{A}_{p, q}.$
    \item $\prescript{}{s} \Gamma_{\Lambda}$ is infinite but has no component of the form $\ZZ A_{\infty}^{\infty}$
    \item All but a finite number of components of $\Gamma_{\Lambda}$ are of the form $\ZZ A_{\infty}/ \langle \tau \rangle$
    \item There are positive integers $m, p,$ and $q$ such that $\prescript{}{s} \Gamma_{\Lambda}$ is a disjoint union of $m$ components of the form $\ZZ \tilde{A}_{p, q}$, $m$ components of the form  $\ZZ A_{\infty}/ \langle \tau^p \rangle$, $m$ components of the form $\ZZ A_{\infty}/ \langle \tau^q \rangle,$ and infinitely many components of the form $\ZZ A_{\infty}/ \langle \tau \rangle.$
 \end{enumerate}
\end{theorem}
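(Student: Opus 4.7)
The plan is to base the argument on the Butler--Ringel and Wald--Waschbüsch classification of indecomposable modules over a special biserial algebra: every finite-dimensional indecomposable is either a \emph{string module} $M(w)$ for a combinatorial string $w$, or a \emph{band module} $M(b,\lambda,n)$ indexed by a band $b$, a scalar $\lambda \in k^{*}$, and a multiplicity $n\geq 1$. The band modules for a fixed band $b$ organise into a one-parameter family of homogeneous tubes of rank $1$ in the stable Auslander--Reiten quiver (one such tube for each $\lambda$), while the string modules either lie in tubes or in components of tree class $A_{\infty}^{\infty}$, controlled by the Butler--Ringel combinatorial formula for $\tau$ on strings and by the self-injective structure of $\Lambda$.

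With this classification in hand, the equivalence of (i) and (ii) becomes almost immediate: the dimension of $M(b,\lambda,n)$ equals $n$ times the length of $b$, so a special biserial algebra has polynomial growth precisely when it admits only finitely many bands up to cyclic rotation and inversion, and this condition is the same as being domestic; the number of (equivalence classes of) bands is exactly the integer $m$ appearing in (vi). The implications (vi) $\Rightarrow$ (iii), (vi) $\Rightarrow$ (iv), and (vi) $\Rightarrow$ (v) are then immediate from the statement itself, while (iii) $\Rightarrow$ representation-infinite is obvious from the infinitude of the component.

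The substantive content is to start from (i) (equivalently (ii)) and deduce (vi); the remaining implications then follow. I would proceed in two stages. First, show that the band modules contribute exactly the infinitely many homogeneous tubes of (vi): finitely many bands, each contributing a $\PP^{1}(k)$-family of rank-$1$ tubes. Second, analyse the non-homogeneous components, which must be built from string modules. Using the combinatorial $\tau$-formula together with self-injectivity, the strings organise into finitely many $\tau$-orbits of three kinds: two "periodic" families of orbits attached to the two sides of each band (giving the tubes of ranks $p$ and $q$), and one "non-periodic" family bridging them (giving the $\ZZ\tilde{A}_{p,q}$ components). The integers $p,q$ are then read off as the lengths of the two maximal non-trivial paths in the biserial structure adjacent to the band.

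The main obstacle is exactly this final combinatorial step: proving that in the domestic self-injective case the non-periodic string components are of type $\ZZ\tilde{A}_{p,q}$ rather than $\ZZ A_{\infty}^{\infty}$, and that the periodic tube ranks match the Brauer-graph data. This requires a careful analysis of how strings interpolate between bands under iterated $\tau$, exploiting the fact that a self-injective special biserial algebra is a Brauer graph algebra whose cycle combinatorics determines the tube ranks. Once (vi) is established, the implications (iv) $\Rightarrow$ (i) and (v) $\Rightarrow$ (i) follow by contrapositive: a non-domestic self-injective special biserial algebra admits infinitely many bands, and the combinatorial analysis shows this forces both a $\ZZ A_{\infty}^{\infty}$ component (contradicting (iv)) and infinitely many non-tube components (contradicting (v)).
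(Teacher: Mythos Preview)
The paper does not prove this theorem at all: it is quoted verbatim as \cite[Theorem 2.1]{erdmann1992auslander} and used as background input, with no argument supplied. So there is nothing in the present paper to compare your proposal against; the authors simply invoke the result and move on to the structure of their particular Brauer star algebras.

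As for your sketch itself, the overall architecture is the right one and is indeed the line taken in the Erdmann--Skowro\'nski paper: reduce to the string/band dichotomy, identify the band modules with the homogeneous rank-$1$ tubes, and analyse the string components via the combinatorial $\tau$-formula. Two places deserve more care, however. First, the equivalence of ``domestic'' and ``polynomial growth'' is not a formality for arbitrary algebras; for special biserial algebras it works because the band modules are the only source of one-parameter families and the number of bands is either finite or gives rise to strictly exponential growth in the string combinatorics---you should make that dichotomy explicit rather than asserting it. Second, your description of how the integers $p,q$ arise (``lengths of the two maximal non-trivial paths adjacent to the band'') is too loose: in general there may be several bands, and the tube ranks are determined by the cyclic walk lengths on the two sides of each band in the associated Brauer graph, which is why all $m$ non-periodic components share the same pair $(p,q)$. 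The contrapositive step for (iv)$\Rightarrow$(i) and (v)$\Rightarrow$(i) also needs the non-trivial fact that in the non-domestic self-injective case the string components are genuinely of type $\ZZ A_\infty^\infty$; this is where the bulk of the work in the original reference lies, and your outline only gestures at it.
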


\begin{theorem}
\cite[Theorem 4.4]{duffield2018auslander}
Let $\Lambda$ be a Brauer graph algebra constructed from a graph $G$ of $n$ edges and suppose $\prescript{}{s} \Gamma_{\Lambda}$ has a $\ZZ \tilde{A}_{p, q}$ component.
\begin{enumerate}[(i)]
  \item If $\Lambda$ is 1-domestic, then $p+q=2n.$
  \item If $\Lambda$ is 2-domestic, then $p+q=n.$
\end{enumerate}
Furthermore, if $G$ is a tree, then $p=q=n.$
\end{theorem}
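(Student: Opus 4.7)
The plan is to read off $p$ and $q$ from the combinatorial structure of the Brauer graph $G$ by analyzing the string and band modules of $\Lambda$. Since $\Lambda$ is special biserial, every indecomposable non-projective $\Lambda$-module is either a string module $M(w)$ or a band module $M(b,n,\lambda)$, and in a $\ZZ\tilde{A}_{p,q}$ component the vertices are precisely string modules, while the two associated homogeneous tubes on either side consist of band modules built from the two bands $b_1,b_2$ whose strings are the "limits" of the strings appearing on the boundary rays of the $\ZZ\tilde{A}_{p,q}$ component. My first step is to set up this dictionary precisely: identify the two bands $b_1,b_2$ responsible for the component, and describe the two sequences of strings $(w_i^{(1)})$ and $(w_i^{(2)})$ on the upper and lower boundary rays.

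Next, I would compute the Auslander–Reiten translate $\tau$ on these boundary strings using the standard combinatorial formula for $\tau$ on string modules of a symmetric special biserial algebra (namely, $\tau$ acts by appropriate hook/cohook additions and deletions on either end of the string, which for symmetric algebras reduces to a rotation along the band). Concretely, each boundary ray is a single $\tau$-orbit of size $p$ (respectively $q$), and that orbit is in bijection with the set of rotations of the band $b_1$ (respectively $b_2$). So $p$ equals the length of $b_1$ measured in the appropriate unit, and likewise $q$ for $b_2$.

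The main combinatorial step is then to relate the lengths of $b_1$ and $b_2$ to the number of edges $n$ of $G$. Here I would use the description of bands in a Brauer graph algebra: a band corresponds to a closed walk in $G$ which is "alternating" in the sense prescribed by the cyclic order around each vertex. In the 1-domestic case, $G$ contains a unique cycle $C$ with the appropriate multiplicity data, and a direct count shows the two admissible closed walks together traverse every edge of $G$ exactly twice, yielding $p+q=2n$. In the 2-domestic case, the combinatorial analysis (e.g.\ by the Bondarenko / WW-type classification used by Erdmann–Skowroński and by Duffield) gives $p+q=n$. When $G$ is a tree with exactly two distinguished vertices (necessarily forcing the 2-domestic case), both admissible closed walks are forced to go all the way around the tree through every edge, yielding $p=q=n$.

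The main obstacle will be the careful bookkeeping in step two and three: relating the combinatorial length of a band to the $\tau$-period of its associated boundary strings requires tracking the hooks and cohooks that $\tau$ inserts at the two ends of each boundary string, and verifying that a full rotation of the band corresponds to exactly $p$ (respectively $q$) applications of $\tau$. Once this dictionary is pinned down, the equalities $p+q=2n$, $p+q=n$, and $p=q=n$ each follow from a direct edge-count in the Brauer graph; the bulk of the work is in justifying the dictionary rather than in the counting itself.
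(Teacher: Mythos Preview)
The paper does not give a proof of this statement at all: it is quoted as a cited result from Duffield and used only as background, so there is nothing in the paper to compare your proposal against.

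That said, your proposal contains a concrete error. You write that a Brauer tree with two distinguished vertices ``necessarily forc[es] the 2-domestic case,'' but this is backwards: as the paper itself recalls (citing Bocian--Skowro\'nski), a tree with exactly two vertices of multiplicity $2$ gives a \emph{1-domestic} algebra. And this is forced by the theorem you are proving: if $G$ is a tree and $\Lambda$ were 2-domestic, then $p=q=n$ together with $p+q=n$ would give $2n=n$, impossible for $n\ge 1$. So in the tree case you should be invoking clause (i), and the band-length count you want is that the unique band (up to rotation and inversion) traverses every edge of the tree twice, giving total arrow-length $2n$, with the two boundary $\tau$-orbits each of length $n$.

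Beyond this, your overall strategy --- identify $p,q$ as the $\tau$-periods of the two boundary rays of the $\ZZ\tilde A_{p,q}$ component, compute those periods via the hook/cohook description of $\tau$ on string modules, and then count edges in the associated bands --- is the correct shape of argument and is in line with how Duffield proceeds. But you should be aware that what you call ``careful bookkeeping'' is in fact the entire content of the proof; your sketch does not yet supply any of it.
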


Let $\Lambda$ be a representation-infinite Brauer graph algebra with Brauer graph $G$.  Then, by \cite{roggenkamp24biserial} we know that $\Lambda$ is 1-domestic if and only if $\Lambda$ satisfies one of the following conditions:\\
(D1) $G$ is a tree with exactly two vertices having multiplicity 2 and all other vertices having multiplicity 1;\\
(D2) $G$ is a graph with a unique cycle of odd length and all vertices having multiplicity 1.\\
\indent We say that a Brauer graph algebra $\Lambda$ is a {\emph{generalized Brauer tree algebra}} if the underlying graph of $\Lambda$ is a tree with at least two vertices with multiplicity greater than one.  In particular, Brauer graph algebras satisfying the property (D1) above are examples of generalized Brauer tree algebras.  We now consider a particular generalized Brauer tree algebra described as follows.  Fix $e \in \ZZ^+$ and let $\Lambda$ be a generalized Brauer tree algebra with graph $G$ with $e$ edges all incident with a common vertex, such that the two vertices incident with edge $e$ have multiplicity two and all other vertices have multiplicity 1 (see Figure \ref{fig:GenBrauerStar}). The corresponding Brauer graph algebra has quiver $Q_e, e \in \ZZ^+$ (see Figure \ref{fig:OurQuiver}).  Now, set $\Lambda = kQ_e/I$ where the ideal $I$ is described in Figure \ref{fig:OurQuiver}.

Since universal deformation rings are preserved by derived equivalences \cite{rickard1991derived, bleher2017velez}, by Kauer \cite{kauer1998derived} we may reduce to the case when $\Lambda = kQ_e/I$, since all Brauer graph algebras of type (1) as described above, are derived equivalent to $\Lambda$ for some choice of $e$.  That is, any 1-domestic symmetric special biserial algebra is derived equivalent to an algebra given by the generalized Brauer star in Figure \ref{fig:GenBrauerStar} and so for our analysis, we may reduce to $\Lambda$ as given above.

\begin{figure}
    \centering
    \includegraphics[scale = 0.5]{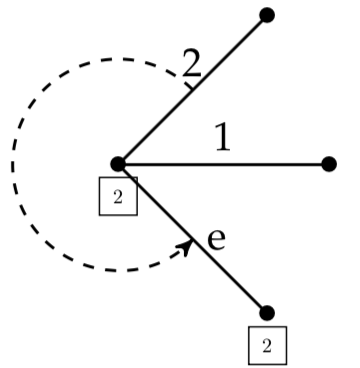}
    \caption{Generalized Brauer star $G$ having $e$ edges, $e \in \ZZ^+,$ and the two vertices incident with edge $e$ having multiplicity 2}
    \label{fig:GenBrauerStar}
\end{figure}

\begin{figure}
    \centering
    \includegraphics[scale = 0.4]{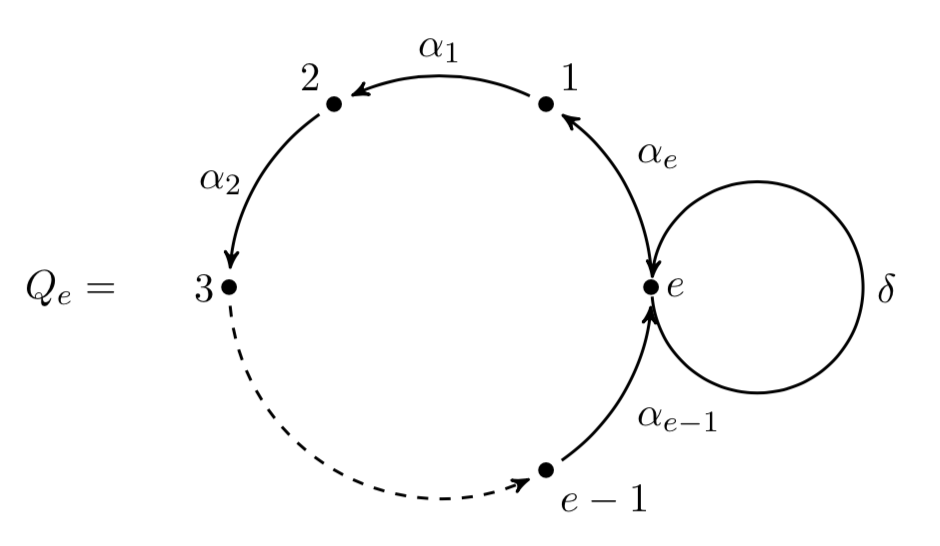}
    ~
\includegraphics[scale=0.5]{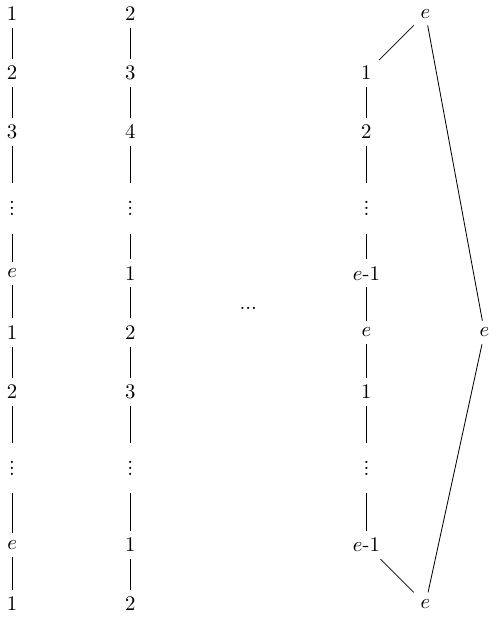}
    \caption{On the left is the quiver $Q_e$ associated with the generalized Brauer star having $e$ edges, with the two vertices incident with edge $e$ having multiplicity 2.  In $kQ_e/I$, the relations generating $I$ are $\alpha_i(\alpha_{i-1}  \cdots \alpha_1 \alpha_e \cdots \alpha_{i})^2$ for all $i$ with $1\leq i\leq e$, $(\alpha_{e-1} \cdots \alpha_1 \alpha_e)^2-\delta^2$, $\alpha_e \delta^2$, $\delta^2 \alpha_{e-1}$ and $\delta^3$.  On the right is a picture of the radical layers of the indecomposable projectives for $kQ_e/I$.  Note that all are uniserial except for $P_e$.}
    \label{fig:OurQuiver}
\end{figure}

Furthermore, we have that the stable Auslander-Reiten quiver of $\Lambda,$ denoted by $\prescript{}{s} \Gamma_{\Lambda},$ is the disjoint union of 1 component of the form $\ZZ \tilde{A}_{e, e}$ (non-periodic component),  2 components of the form $\ZZ A_{\infty}/ \langle \tau^e \rangle$ (exceptional tubes),  and infinitely many components of the form $\ZZ A_{\infty}/ \langle \tau \rangle$ (homogeneous tube of rank one).

In addition, for all $e \in \ZZ^+$ and for each algebra $\Lambda = kQ_e/I,$ we have $\Lambda/\textrm{soc}(\Lambda)$ is a string algebra, thus all indecomposable non-projective $\Lambda$-modules are obtained combinatorially by string and band modules (see \cite{butler1987auslander}). Moreover, the $\Lambda$-module homomorphisms between string and band modules have been explicitly described in \cite{krause1991maps}, though we may always view band modules explicitly as representations of the bound quiver $Q_e$ satisfying the relations in $I$ (see Figure \ref{fig:BandAsRep}).

\subsection{Band modules for Brauer Tree Algebras}
\label{bandmodules}
A few of the results in Section \ref{StableEndRing} depend on the unique structure of band modules for Brauer tree algebras. We highlight some important results in this subsection. For more on band modules see \cite{krause1991maps}.  By inspection, the only band modules correspond to the the word $\alpha_{e-1} \alpha_{e-2}...\alpha_1 \alpha_e {\delta}^{-1}$.  For example, when $e=3$ this corresponds to the band module $B(n,\lambda)$ given by
$$\begin{tikzpicture}[description/.style={fill=white,inner sep=2pt}]
\matrix (m) [matrix of math nodes, row sep=3em,
column sep=2.5em, text height=1.5ex, text depth=0.25ex]
{ k^n & k^n\\ 
k^n &  k^n \\ };
\path[->,font=\scriptsize]

(m-1-1) edge node[auto] {$J_n(\lambda)$} (m-1-2)
(m-1-1) edge node[left] {$I_n$} (m-2-1)
(m-2-1) edge node[auto] {$I_n$} (m-2-2)
(m-2-2) edge node[auto] {$I_n$} (m-1-2);
\end{tikzpicture}$$
where $I_n$ is the identity matrix and $J_n(\lambda)$ denotes an $n$ by $n$ Jordan block corresponding to $\lambda \in k^*$. See Figure \ref{fig:BandAsRep} for the band module as a representation of $\Lambda$. 

\begin{figure}
\centering
 \includegraphics[scale=.22]{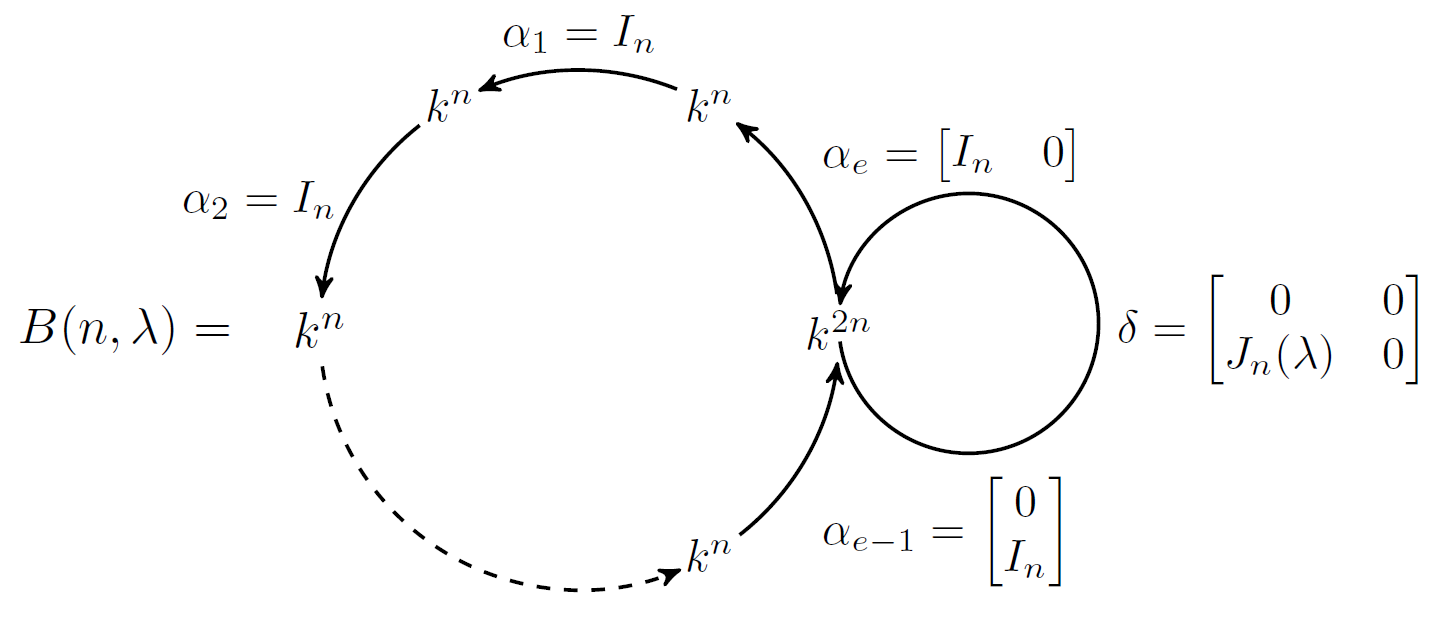}
 \caption{The band module $B(n,\lambda)$ as a representation of the bound quiver $Q_e$.  In the above, $0$ denotes the $n$ by $n$ zero matrix.}
 \label{fig:BandAsRep}
\end{figure}

We now compute the endomorphism ring of a band module.  Let $f = \{f(1), f(2), \ldots, f(e) \}$ be an arbitrary endomorphism, where $f(i)$ is a $k$-endomorphism of $B(n,\lambda)(i) \cong k^n$, and $$f(e) = \begin{pmatrix} A & B\\C & D\\ \end{pmatrix}, $$ where $A, B, C$ and $D$ are $n \times n$ matrices in $k$.

Commutativity along $\alpha_1, \alpha_2, \ldots, \alpha_n$ yields that $B=0$ and $$ A=f(1)=f(2)=\cdots = f(e-1) =D.$$  Commutativity along $\delta$ corresponds to the statement that $J_n(\lambda) \cdot A = D \cdot J_n (\lambda) = A \cdot J_n(\lambda)$.  Thus, letting $A'$ denote an $n$ by $n$ lower triangular matrix that is constant on the diagonals, and $C'$ denote an arbitrary $n$ by $n$ matrix, we have that $\text{dim}_k({\text{End}}_{\Lambda}(B(n,\lambda)))=n^2+n$ and 
$${\text{End}}_{\Lambda}(B(n,\lambda))= 
\begin{cases} k[x]/(x^2), & \text{ if } n=1\\
\begin{pmatrix} A' & 0\\ C' & A' \end{pmatrix}, & \text{ if } n >1.
\end{cases} $$
Note that by a similar argument $\text{dim}_k({\text{Hom}}_{\Lambda}(B(m,\lambda),B(n,\mu)))$ is $n^2$ if $\lambda \neq \mu$.

We will now classify all  indecomposable $\Lambda$-modules with stable endomorphism ring isomorphic to $k$  by analyzing each component of the stable Auslander-Reiten quiver of $\Lambda$ and thus obtain $\Lambda$-modules guaranteed to have universal deformation rings.

\section{Modules with stable endomorphism ring isomorphic to k}
\label{StableEndRing}

Recall that $\Lambda$ is a generalized Brauer star algebra with $e$ edges labeled $1$ through $e$, where the two vertices incident with edge $e$ have multiplicity two, and all other vertices have multiplicity one (see Figure \ref{fig:GenBrauerStar}).  In particular, $\Lambda=kQ_e/I$, where $Q_e$ and $I$ are as in Figure \ref{fig:OurQuiver}.

We begin by stating an obvious lemma: 

\begin{lemma}
\label{heightlemma}
Let $e \geq 2$ and let $M$ be a module for $kQ_e$ with $rad^{(e+1)} \cdot M=0$.  Let $f \in End_{\Lambda}(M)$ which factors through a projective.  Then $im (f) \subseteq soc(M)$.

\begin{pf}
Let $r:M \to P_i$ be any homomorphism.  Note that the height of $P_i$ is $2e+1$ for $i=1, 2, ... e.$ Hence the result follows.
\end{pf}
\end{lemma}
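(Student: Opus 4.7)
The plan is to exploit the factorization of $f$ through a projective together with the radical length hypothesis $\mathrm{rad}^{e+1}\cdot M=0$. Since any projective $\Lambda$-module is a direct sum of the indecomposables $P_1,\ldots,P_e$, I first reduce to the case where $f = g\circ r$ with $r\colon M\to P_i$ and $g\colon P_i\to M$ for some fixed $i\in\{1,\ldots,e\}$, as the general case follows by additivity.

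Next, I observe that $r(M)$ is a submodule of $P_i$ annihilated by $\mathrm{rad}^{e+1}(\Lambda)$: for any $m\in M$ and $x\in \mathrm{rad}^{e+1}(\Lambda)$, we have $x\cdot r(m)=r(x\cdot m)=0$ since $\mathrm{rad}^{e+1}\cdot M=0$. Equivalently, $r(M)$ has Loewy length at most $e+1$. Using the Brauer graph structure described in Section \ref{brauer}, each $P_i$ has Loewy length $2e+1$: for $i\neq e$ the projective $P_i$ is uniserial of that length (since the outer vertex has multiplicity one, contributing a trivial arm), while for $i=e$ the projective is biserial with a long arm of length $2e$ coming from the central vertex and a short arm of length $2$ coming from the outer vertex, so the Loewy length is again $2e+1$. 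I then claim that every submodule of $P_i$ of Loewy length at most $e+1$ is contained in $\mathrm{rad}^e P_i$.

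Granted this claim, the conclusion is immediate: $\mathrm{im}(f)=g(r(M))\subseteq g(\mathrm{rad}^e P_i)\subseteq \mathrm{rad}^e M$, and $\mathrm{rad}\cdot \mathrm{rad}^e M\subseteq \mathrm{rad}^{e+1}M=0$ gives $\mathrm{rad}^e M\subseteq \mathrm{soc}(M)$. Therefore $\mathrm{im}(f)\subseteq \mathrm{soc}(M)$.

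The main obstacle will be verifying the submodule containment for $P_e$, which is biserial rather than uniserial. For the uniserial $P_i$ with $i\neq e$, the submodule lattice is a chain and the first submodule of Loewy length at most $e+1$ from the top is exactly $\mathrm{rad}^e P_i$, so the claim is immediate. For $P_e$, I would argue using the explicit two-chain structure together with the fact that $\Lambda$ is symmetric, so $\mathrm{soc}(P_e)$ is the simple module $S_e$ and every nonzero submodule of $P_e$ contains it; combined with a layer-by-layer inspection of which basis elements of $P_e$ are killed by $\mathrm{rad}^{e+1}$, this pins down the submodules of small Loewy length inside $\mathrm{rad}^e P_e$. I expect this case check to be short given the concrete description of $P_e$, but it is the only nonformal step.
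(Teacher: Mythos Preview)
Your strategy---reduce to a single indecomposable projective and exploit that each $P_i$ has Loewy length $2e+1$ while $M$ has Loewy length at most $e+1$---is exactly the paper's (one-line) argument. The gap is in the claim you single out for $P_e$: it is \emph{false} that every submodule of $P_e$ of Loewy length at most $e+1$ lies in $\mathrm{rad}^eP_e$.

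The short $\delta$-arm you yourself mention is the obstruction. In the basis of Lemma~\ref{bandstablelemma}, the element $b_{e,3}=\delta(b_{e,1})$ generates the two-dimensional submodule $kb_{e,3}+kb_{e,4}\cong M(\delta)$, since $\alpha_e\delta=0$ and $\delta(b_{e,3})=b_{e,4}\in\mathrm{soc}\,P_e$. This submodule has Loewy length $2\le e+1$, yet $b_{e,3}\in\mathrm{rad}\,P_e\setminus\mathrm{rad}^2P_e$, so for $e\ge 2$ it is not contained in $\mathrm{rad}^eP_e$. Equivalently, $\mathrm{soc}^{e+1}P_e=\mathrm{rad}^eP_e+kb_{e,3}\supsetneq\mathrm{rad}^eP_e$, so your layer-by-layer check would not yield the containment you anticipate, and the chain $g(r(M))\subseteq g(\mathrm{rad}^eP_e)\subseteq\mathrm{rad}^eM$ breaks.

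The repair is short. Keep $r(M)\subseteq\mathrm{soc}^{e+1}P_e$, but instead of forcing $r(M)$ into $\mathrm{rad}^eP_e$, observe that $\mathrm{rad}\cdot\mathrm{soc}^{e+1}P_e\subseteq\mathrm{rad}^{e+1}P_e$: indeed $\mathrm{rad}\cdot\mathrm{rad}^eP_e=\mathrm{rad}^{e+1}P_e$, and $\mathrm{rad}\cdot b_{e,3}=kb_{e,4}\subseteq\mathrm{soc}\,P_e=\mathrm{rad}^{2e}P_e\subseteq\mathrm{rad}^{e+1}P_e$ (here the relation $\delta^2=(\alpha_{e-1}\cdots\alpha_1\alpha_e)^2$ is what places $b_{e,4}$ deep in the radical filtration despite the short arm). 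Then
\[
\mathrm{rad}\cdot g(r(M))=g\bigl(\mathrm{rad}\cdot r(M)\bigr)\subseteq g\bigl(\mathrm{rad}^{e+1}P_e\bigr)\subseteq\mathrm{rad}^{e+1}M=0,
\]
so $g(r(M))\subseteq\mathrm{soc}(M)$ as required.
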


\begin{lemma} \label{stringmods}
Let $\mathfrak{C}$ be an exceptional tube of the stable Auslander-Reiten quiver of $\Lambda$.
\begin{enumerate}[(i)]

\item If $e \geq 2$, the modules in $\mathfrak{C}$ with stable endomorphism ring isomorphic to $k$ are the string modules $M(1_{e-1}),$ denoted as $S_{e-1}$, $M(\alpha_{e-2} \cdots \alpha_i)$ for $1 \leq i \leq e-2$, and $M(\alpha_{e-2} \cdots \alpha_1 \alpha_e \delta^{-1})$ along with their $\Omega$-translates.

\item If $e = 1$, the only modules $V$ in $\mathfrak{C}$ with stable endomorphism ring isomorphic to $k$ are the string modules $M(\alpha)$ and $M(\delta)=\Omega(M(\alpha))$.
\end{enumerate}

\end{lemma}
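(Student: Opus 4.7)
The plan is to use the string-module combinatorics of $\Lambda = kQ_e/I$. Since $\Lambda/\operatorname{Soc}(\Lambda)$ is a string algebra, every indecomposable non-projective $\Lambda$-module is either a string or a band module. By Subsection~\ref{bandmodules}, the band modules are $B(n,\lambda)$ with $\lambda\in k^*$ and $n\ge 1$, and these sit in the homogeneous tubes of $\prescript{}{s}\Gamma_\Lambda$. Hence every module in the exceptional tube $\mathfrak{C}$ is a string module $M(w)$, and the task reduces to determining those string words $w$ such that $M(w)\in\mathfrak{C}$ and $\PEnd_\Lambda(M(w))\cong k$.

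First I would pin down the mouth of $\mathfrak{C}$. Since $\mathfrak{C}\cong \ZZ A_\infty/\langle\tau^e\rangle$ is a rank-$e$ tube, its mouth is a single $\tau$-orbit of $e$ quasi-simple modules. Applying the combinatorial formula for $\tau$ on string modules (as in \cite{butler1987auslander}) to the Brauer star $G$ with its two multiplicity-$2$ vertices, I would verify directly that the $e$ string modules listed in~(i), namely $S_{e-1}=M(1_{e-1})$, the uniserials $M(\alpha_{e-2}\cdots\alpha_i)$ for $1\le i\le e-2$, and $M(\alpha_{e-2}\cdots\alpha_1\alpha_e\delta^{-1})$, form such a $\tau$-orbit and thus comprise the mouth of one of the exceptional tubes. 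A count gives $1+(e-2)+1 = e$, matching the expected rank.

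Next, for each mouth module $M$ I would show $\PEnd_\Lambda(M)\cong k$. The Krause description \cite{krause1991maps} of $\Hom$-spaces between string modules expresses $\End_\Lambda(M)$ as the $k$-span of $\operatorname{id}_M$ together with a small number of auxiliary endomorphisms coming from substring inclusions and quotients. Each listed $M$ satisfies $\operatorname{rad}^{e+1}M=0$, so by Lemma~\ref{heightlemma} every endomorphism factoring through a projective has image in $\operatorname{Soc}(M)$. A case analysis, treating the uniserials and the string ending in $\delta^{-1}$ separately, shows that each non-identity basis endomorphism does land in $\operatorname{Soc}(M)$, yielding $\PEnd_\Lambda(M)\cong k$. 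To extend to all modules in~(i) I invoke Theorem~\ref{isomk}(iii) to conclude $\PEnd_\Lambda(\Omega^n M)\cong k$ for every $n\in\ZZ$.

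Finally, I would rule out every other string module in $\mathfrak{C}$ and handle the $e=1$ case. For a string module $N=M(w)$ strictly deeper in $\mathfrak{C}$, the word $w$ strictly contains a mouth word as a sub-word; the standard projection-then-inclusion construction on this sub-word produces a non-identity endomorphism of $N$ whose image is not contained in $\operatorname{Soc}(N)$, so Lemma~\ref{heightlemma} prevents it from factoring through a projective and forces $\dim_k\PEnd_\Lambda(N)\ge 2$. For $e=1$ the quiver $Q_1$ has a single vertex with loops $\alpha$ and $\delta$; the only non-projective indecomposable string modules are $M(\alpha)$ and $M(\delta)=\Omega(M(\alpha))$, and both are easily checked to have stable endomorphism ring $k$, while no longer string contributes. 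The main obstacle is the non-mouth case: one must track, for each string word $w$ deeper in $\mathfrak{C}$, an explicit substring factorization of $M(w)$ yielding an endomorphism whose image escapes the socle. This is finite combinatorics, made tractable by the height criterion of Lemma~\ref{heightlemma} and the Krause basis for $\Hom$, but requires careful bookkeeping of how each mouth word embeds into longer strings in the tube.
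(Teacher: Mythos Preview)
Your overall strategy---analyze representatives in $\mathfrak{C}$, show the listed ones have $\PEnd\cong k$, and use Lemma~\ref{heightlemma} to exhibit a non-projective endomorphism for the rest---matches the paper. But two steps are wrong as written.

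First, the $e$ modules in (i) are \emph{not} the mouth of $\mathfrak{C}$. The paper obtains them from $S_{e-1}$ by successively ``adding right hooks'' in the sense of \cite{butler1987auslander}; each hook is an irreducible map, so this walks up a ray in the tube. The modules sit in $e$ \emph{different} layers (equivalently, $e$ different $\Omega$-orbits, since $\Lambda$ is symmetric and $\Omega$ permutes the two exceptional tubes), not in a single $\tau$-orbit. If you tried to verify your claimed $\tau$-orbit directly you would find it fails; for example $\tau(S_{e-1})$ is another simple module, not $M(\alpha_{e-2})$. Your dichotomy ``mouth versus deeper'' therefore does not match the statement: the modules with $\PEnd\cong k$ occupy the first $e$ layers, and the ones to be excluded are those in layers $\geq e+1$ (the $M_{n,0}$ and $M_{n,i}$ in the paper's notation).

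Second, and more seriously, your use of Lemma~\ref{heightlemma} for $M=M(\alpha_{e-2}\cdots\alpha_1\alpha_e\delta^{-1})$ is in the wrong direction. You need to show the non-identity endomorphism (sending the top $S_e$ to the copy of $S_e$ in $\operatorname{Soc}(M)$) \emph{does} factor through a projective, but Lemma~\ref{heightlemma} only says that maps which factor through a projective have image in the socle---it gives no converse. Landing in the socle is necessary, not sufficient. The paper handles this by explicitly factoring the map through $P_e$: include the quotient $M(\delta^{-1})$ into $\operatorname{Soc}(P_e)$ and then project $P_e$ onto $M$. For the uniserials $S_{e-1}$ and $M(\alpha_{e-2}\cdots\alpha_i)$ there is no issue, since $\End\cong k$ already (all composition factors are distinct), but you should say this rather than invoke a socle argument. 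The same issue recurs when $e=1$: for $M(\alpha)$ one must explicitly exhibit the factorization of the non-identity endomorphism through $P_1$, and the deeper strings $M((\alpha\delta^{-1})^n\alpha)$ for $n\ge 1$ must be ruled out individually (Lemma~\ref{heightlemma} as stated even requires $e\ge 2$, so a small separate argument is needed there).
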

\begin{pf}
(i) Suppose $e \geq 2.$ We only need to consider the indecomposable $\Lambda$-modules starting with the simple module $S_{e-1}$ and the $e-1$ $\Lambda$-modules arrived at by adding right hooks to this module, as described in \cite{butler1987auslander}.  We denote these modules by $M(\alpha_{e-2} \cdots \alpha_i)$ for $1 \leq i \leq e-2$, and $M(\alpha_{e-2} \cdots \alpha_1 \alpha_e \delta^{-1})$ respectively.  Note that each of these modules will be in a different $\Omega$-orbit, which coincides with a different ``layer'' of the component, and these include all the $\Omega$-orbits in both periodic components. We begin by showing that each of these modules has stable endomorphism ring isomorphic to $k$, and then show that modules in all other ``layers'' have stable endomorphism rings whose $k$-dimensions are greater than 1.

First, for the modules $S_{e-1}$ and $M(\alpha_{e-2}\alpha_{e-3} \cdots \alpha_i)$ for $1\leq i\leq e-2,$ up to a scalar, the only endomorphism is the identity map, thus the stable endomorphism ring for each of these must be $k.$ Moreover, the module $M = M(\alpha_{e-2} \alpha_{e-3} \cdots \alpha_1 \alpha_e \delta^{-1})$ has a two-dimensional endomorphism ring generated by the identity map and the map sending the simple $S_e = M/\textrm{rad}(M)$ to the simple $S_e = \textrm{soc}(M).$  Since the latter map factors through the projective $P_e$ by first including the quotient module $M(\delta^{-1})$ into $\textrm{soc}(P_e),$ and then projecting $P_e$ onto $M$, ${\PEnd}_{\Lambda}(M)\cong k.$

We now show that for all $n \geq 1,$ the $\Lambda$-modules $M(\alpha_{e-2} \cdots \alpha_1 \alpha_e \delta^{-1}(\alpha_{e-1} \alpha_{e-2} \cdots \alpha_1 \alpha_e \delta^{-1})^{n}),$ denoted by $M_{n,0},$ and  $M(\alpha_{e-2} \cdots \alpha_1 \alpha_e \delta^{-1}(\alpha_{e-1} \alpha_{e-2} \cdots \alpha_1 \alpha_e \delta^{-1})^{n-1}\alpha_{e-1}\cdots \alpha_i)$ for $1\leq i \leq e-1,$ denoted by $M_{n,i},$ do not have stable endomorphism isomorphic to $k.$  First, consider $M_{n,0}$ for fixed $n \in \ZZ^+$.  Then, the map sending the quotient of $M(\alpha_{e-2} \cdots \alpha_1 \alpha_e \gamma^{-1})$ to the submodule of the same form does not factor through the projective by Lemma \ref{heightlemma}.

Now, fix $n \in \ZZ^+$ and $i \in \{1, \ldots, e-1\}.$ Then, the identity map of $M_{n,i}$ does not factor through a projective $\Lambda$-module. Now for $i \in \{1, \ldots, e-2\}$ consider the submodule of $M_{n,i}$ of the form $M(\alpha_{e-2} \cdots \alpha_i).$ This module is also a quotient module of $M_{n,i},$ and the endomorphism from the quotient module to the submodule does not factor through a projective by Lemma \ref{heightlemma}. Similarly, when $i=e-1$, the map sending the simple $S_{e-1} \in \textrm{rad}(M)$ to the simple $S_{e-1}$ which is a summand of the socle will also not factor through a projective.

\vspace{12px}
\noindent (ii) For $e=1$ we notice that the only modules to consider are the following:
\begin{enumerate}[i.]
 \item $M(\alpha);$ and
 \item $M((\alpha \delta^{-1})^n\alpha)$ for $n \geq 1.$
\end{enumerate}

For $M(\alpha)$ we note that there are only two maps and only the non-identity endomorphism factors through the projective $\Lambda$-module thus ${\PEnd}_{\Lambda}(M(\alpha)) \cong k.$ Fix $n \geq 1$ and consider the module $M((\alpha \delta^{-1})^n\alpha)$. The identity map does not factor through the projective and we note that $M(\alpha)$ is both a submodule and quotient module of $M((\alpha \delta^{-1})^n\alpha).$ By Lemma \ref{heightlemma} the map sending this quotient module of $M((\alpha \delta^{-1})^n\alpha)$ to the submodule $M(\alpha)$ does not factor through a projective. Thus ${\PEnd}_{\Lambda}(M((\alpha \delta^{-1})^n\alpha))$ is not isomorphic to $k.$
\end{pf}

\begin{lemma} \label{nonperiodicmods}
Let $\mathfrak{C}$ be the non-periodic component of the stable Auslander-Reiten quiver of $\Lambda$. Then there are precisely $e$ $\Omega$-orbits of modules in $\mathfrak{C}$, and all modules in these $\Omega$-orbits have stable endomorphism ring isomorphic to $k$.
\end{lemma}

\begin{pf}
Constructing the component $\mathfrak{C}$ we note that every module in $\mathfrak{C}$ is an $\Omega$-translate of the simple module $S_e$ or one of the uniserial modules $M(\alpha_{e-1} \alpha_{e-2} \cdots \alpha_i)$ for $1 \leq i \leq e-1.$ But for any of these modules, up to a scalar, the only endomorphism is the identity map. Hence the stable endomorphism ring for each of these modules is isomorphic to $k$.
\end{pf}

\begin{lemma}\label{bandstablelemma}
Let $\mathfrak{C}$ be the tube of rank 1 of the stable Auslander-Reiten quiver of $\Lambda$. Then the module(s) $B(n,\lambda)$ of $\mathfrak{C}$ have stable endomorphism ring isomorphic to $k$ if and only if $n = 1$ and $\lambda^2\neq -1.$
\end{lemma}

\begin{pf}

First consider $n=1$.  We show that if an endomorphism of $B(1, \lambda)$ factors through a projective, then it factors through $\bigoplus P_e$.
Recall, that for $U$ indecomposable, $\text{dim}_k({\text{Hom}}_{\Lambda}(P_i,U))$ is the multiplicity of $S_i$ in a composition series for $U$.  By self-injectivity, the same holds for $\text{dim}_k({\text{Hom}}_{\Lambda}(U,P_i))$.  Thus, ${\text{Hom}}_{\Lambda}(B(1,\lambda),P_i) = {\text{Hom}}_{\Lambda}(P_i,B(1,\lambda)) = k$, for $1 \leq i \leq e-1$, and $k^2$ for $i=e$.  It is easy to see that ${\text{Hom}}_{\Lambda}(P_i,B(1,\lambda)) \circ {\text{Hom}}_{\Lambda}(B(1,\lambda),P_i)) = 0$ for $1 \leq i \leq e-1.$

Now we will show that inclusion of the radical quotient of $B(1,\lambda)$ into its socle factors through $P_e$ if and only if $\lambda^2 \neq -1$. To do so, define a basis $\left\{b_{i,j}\right\}$ of $P_e$ where $b_{e,1}k \cong P_e/\text{rad}P_e, b_{e,2} = \alpha_{e-1} \ldots \alpha_1 \alpha_e(b_{e,1}), b_{e,3}=\delta(b_{e,1}), b_{e,4}=\delta^2(b_{e,1}), b_{1,j} = \alpha_e(b_{e,j})$ for $j \in \{1,2\}$, and $b_{i,j} = \alpha_{i-1}(b_{i-1},j)$ for $2 \leq i \leq e-1$ and $j \in {1,2}$. Also, define a basis $\left\{c_i\right\}$ of $B(1,\lambda)$ where $c_ek \cong B(1,\lambda)/\text{rad}B(1,\lambda),\\ c_{e+1}=\frac{1}{\lambda}\delta({c_e}), c_1=\alpha_e(c_e)$ and $c_i = \alpha_{i-1}(c_{i-1})$ for $2 \leq i \leq e-1$. We now define linearly independent maps $r_1, r_2: B(1,\lambda) \to P_3$ and $s_1, s_2:P_3 \to B(1,\lambda)$ which define bases for ${\text{Hom}}_{\Lambda}(B(1,\lambda),P_i)$ and ${\text{Hom}}_{\Lambda}(P_i,B(1,\lambda))$, respectively.
\begin{itemize}
\item The map $r_1$ acts by $c_e \mapsto b_{e,2}+\lambda b_{e,3}$, $c_i \mapsto b_{i,2}$ for $1 \leq i \leq e-1$, and $c_e+1 \mapsto b_{e,4}.$
\item The map $r_2$ acts by $c_e \mapsto b_{e,4}$ and all other basis elements are sent to $0$.
\item The map $s_1$ acts by $b_{e,1} \mapsto c_e$, $b_{e,2} \mapsto c_{e+1}$, $b_{e,3} \mapsto \lambda c_{e+1}$, $b_{i,1} \mapsto c_i$ for $1 \leq i \leq e-1$ and all other basis elements are sent to $0$.
\item The map $s_2$ acts by $b_{e,1} \mapsto c_{e+1}$ and all other basis elements are sent to $0$.
\end{itemize}

Note that $s_1 \circ r_2, s_2 \circ r_2$ and $s_2 \circ r_1$ are identically zero, while $s_1 \circ r_1$ sends $c_e$ to $(\lambda^2+1) c_{e+1}$ and all other basis vectors to zero, which is the map sending the radical quotient of $B(1,\lambda)$ into its socle.  Therefore, ${\text{Hom}}_{\Lambda}(P_3,B(1,\lambda)) \circ {\text{Hom}}_{\Lambda}(B(1,\lambda),P_3) = 0$ if and only if $\lambda^2 = -1$.  Since ${\text{End}}_{\Lambda}(B(1,\lambda))$ is two-dimensional, and the identity map clearly does not factor through a projective, the result follows for $n=1$.  

Now consider $n\geq 2$ and $1 \leq i \leq e-1$. Since $n \geq 2$, it follows $B(1,\lambda)$ is isomorphic to both a submodule and a quotient of $B(n,\lambda)$.  Thus, let $t: B(n,\lambda) \to B(n,\lambda)$ denote projection onto $B(1,\lambda)$ followed by inclusion.  Since $im(t) \nsubseteq \textrm{rad}(B(n,\lambda))$, $t$ does not factor through any projective by Lemma \ref{heightlemma}, as ${\textrm{rad}}^{e+1} \cdot B(n,\lambda) = 0$.  Thus, $\textrm{dim}_k({\underline{\text{End}}}_{\Lambda} (B(n,\lambda))) \geq 2$ and the result follows.
 
\end{pf}

\begin{lemma}\label{bandextlemma}
Let $\lambda \in k^*$.  Then, $\displaystyle \Omega(B(1,\lambda))=B(1,-{\lambda}^{-1})$. 
\end{lemma}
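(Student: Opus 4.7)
The plan is to compute $\Omega B(n,\lambda) = \ker\pi$ directly from a minimal projective cover $\pi\colon P_e^{\oplus n}\twoheadrightarrow B(n,\lambda)$ and identify this kernel with $B(n,-\lambda^{-1})$. The top of $B(n,\lambda)$ is $S_e^{\oplus n}$, concentrated at the peak position of the band (the summand $V_e^{(1)}$ on which the Jordan block $\delta = J_n(\lambda)$ has its domain), which forces $\pi$ to send its $n$ top generators $g_1,\dots,g_n$ to a basis $u_1,\dots,u_n$ of $V_e^{(1)}$.

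Writing $C := \alpha_{e-1}\alpha_{e-2}\cdots\alpha_1\alpha_e$ for the central cycle based at $e$, one computes $\pi(\delta g_i) = J_n(\lambda)(u_i)$ and $\pi(C g_i) = v_i$, both in $V_e^{(2)}$. Setting $g_0 := 0$, the correction elements
\[
k_i := \delta g_i - \lambda C g_i - C g_{i-1} \qquad (1 \le i \le n),
\]
together with the intermediates $\ell^{(j)}_i := \alpha_{j-1}\cdots\alpha_e C g_i$ for $1 \le j \le e-1$ (at vertex $j$) and the socle elements $\ell^{(e)}_i := C^2 g_i = \delta^2 g_i$ (at vertex $e$), span $\ker\pi$. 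A dimension count $n(2e+2) - n(e+1) = n(e+1) = \dim B(n,-\lambda^{-1})$ confirms that these form a basis.

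Using the relations $\alpha_e\delta = 0$, $\delta\alpha_{e-1} = 0$, and $\delta^2 = C^2$, the $\Lambda$-action reads off as: each $\alpha_j$ for $1 \le j \le e-1$ acts as the identity between the successive $\ell$-layers, whereas at vertex $e$
\[
\alpha_e k_i = -\lambda\ell^{(1)}_i - \ell^{(1)}_{i-1}, \qquad \delta k_i = \ell^{(e)}_i, \qquad \alpha_e\ell^{(e)}_i = 0 = \delta\ell^{(e)}_i.
\]
So $\ker\pi$ has dimension vector $(n,\dots,n,2n)$ and the shape of a band module for the band $w = \alpha_{e-1}\cdots\alpha_1\alpha_e\delta^{-1}$, with the Jordan datum now seated on the direct letter $\alpha_e$ (as the block $-J_n(\lambda)$) rather than on $\delta^{-1}$. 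Since the isomorphism class of a band module for $w$ is determined by the conjugacy class of its monodromy around $w$, I compare: for $B(n,\mu)$ the monodromy is conjugate to $J_n(\mu)^{-1}$ (from the inverse letter $\delta^{-1}$, all direct letters contributing identity), while for $\ker\pi$ it is conjugate to $-J_n(\lambda)$ (from the $\alpha_e$-block, with $\delta^{-1}$ contributing $I$). Matching eigenvalues forces $\mu^{-1} = -\lambda$, hence $\mu = -\lambda^{-1}$, as required.

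The main obstacle is justifying that $\ker\pi$ really is a band module for $w$ and not some other indecomposable sharing the same dimension vector. This is pinned down by the explicit module structure computed above, together with the fact that a minimal projective cover yields a kernel without projective summands, which — combined with the classification of indecomposables for a symmetric special biserial algebra and the shape of the homogeneous tube containing the $B(n,\cdot)$ — places $\ker\pi$ in that tube. The remaining step is then the straightforward but bookkeeping-heavy monodromy comparison above.
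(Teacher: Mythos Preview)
Your computation is correct: the kernel of the projective cover $P_e^{\oplus n}\twoheadrightarrow B(n,\lambda)$ has exactly the band-module structure you describe, and the monodromy comparison $-J_n(\lambda)\sim J_n(\mu)^{-1}$ does force $\mu=-\lambda^{-1}$. (The final paragraph is a little overcautious: once you have written down the action of every arrow on your basis $\{k_i,\ell_i^{(j)}\}$ and observed that the map $\delta\colon\operatorname{span}\{k_i\}\to\operatorname{span}\{\ell_i^{(e)}\}$ is invertible, you already \emph{have} a band module for $w$ with an indecomposable automorphism $-J_n(\lambda)$; no appeal to tubes or to the absence of projective summands is needed.)

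The paper takes a different route. It only carries out the kernel computation for $n=1$, where the projective cover is a single copy of $P_e$ and the bookkeeping is trivial: one writes down a basis $\{d_i\}$ of $\ker(P_e\twoheadrightarrow B(1,\lambda))$, checks directly that $\delta d_e=-\lambda^{-1}\,\alpha_{e-1}\cdots\alpha_1\alpha_e\,d_e$, and reads off $\Omega B(1,\lambda)\cong B(1,-\lambda^{-1})$. The passage to arbitrary $n$ is then structural rather than computational: since $\Omega$ is a self-equivalence of the stable category it induces a graph automorphism of the stable Auslander--Reiten quiver, so it must carry the rank-one tube $\{B(n,\lambda):n\ge 1\}$ isomorphically onto the rank-one tube containing $B(1,-\lambda^{-1})$, preserving height. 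Your approach trades this piece of Auslander--Reiten theory for an explicit $n\times n$ matrix calculation; it is more self-contained and yields the isomorphism $\Omega B(n,\lambda)\cong B(n,-\lambda^{-1})$ concretely, at the cost of the extra bookkeeping you flag.
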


\begin{pf}
Let $\lambda \in k^*$. Define bases $\left\{b_{i,j}\right\}$ of $P_e$ and $\left\{c_i\right\}$ of $B(1,\lambda)$, and the natural projection $s_2:P_e \to B(1,\lambda)$ as in Lemma \ref{bandstablelemma}. Since $P_e$ is the projective cover of $B(1,\lambda)$, $ker(s_2) \cong \Omega(B(1,\lambda))$. A basis for $ker(s_2)$ is then given by $\left\{d_i\right\}$ where $d_e=(b_{e,2}-\lambda^{-1}b_{e,3}), d_{e+1}=b_{e,4}$ and $d_i = b_{i,2}$ for $1 \leq i \leq e-1.$ Note that $\delta(d_e) = \delta(b_{e,2}-\lambda^{-1}b_{e,3}) = -\lambda^{-1}b_{e,4} = -\lambda^{-1}\alpha_{e-1} \ldots \alpha_1 \alpha_e(b_{e,2}-\lambda^{-1}b_{e,3})= -\lambda^{-1}\alpha_{e-1} \ldots \alpha_1 \alpha_e(d_e).$ It can be further verified that the $kQ_e/I$-module structure of $ker(s_2)$ is that of $B(1,-\lambda^{-1})$, and therefore $\Omega(B(1,\lambda)) \cong ker(s_2) \cong B(1,-\lambda^{-1})$.

\end{pf}

\section{Proof of Main Results}
\label{MainResults}

\label{sec main results}

Let $k$ be an algebraically closed field of arbitrary characteristic, and $\Lambda$ be a 1-domestic symmetric special biserial algebra.  By the argument in Section \ref{brauer} we reduce to the case of $\Lambda=kQ_e/I$, where $Q_e$ is given if Figure \ref{fig:OurQuiver}.  We now finish with some propositions.

\begin{prop} \label{exceptional}
Let $\mathfrak{C}$ be an exceptional tube of the stable Auslander-Reiten quiver of $\Lambda$.

\begin{enumerate}[(i)]

\item If $e \geq 2$, then there are precisely $e$ $\Omega$-orbits of modules in $\mathfrak{C}$  with stable endomorphism ring isomorphic to $k$. Moreover, $R(\Lambda,V) \cong k$ for the modules belonging to $e-1$ of these $\Omega$-orbits, and $R(\Lambda,V) \cong k[[t]]$ for the modules belonging to the remaining $\Omega$-orbit.

\item If $e = 1$, there is precisely 1 $\Omega$-orbit of modules in $\mathfrak{C}$  with stable endomorphism ring isomorphic to $k$, and $R(\Lambda,V) \cong k[[t]]$ for the modules belonging to this $\Omega$-orbit. 
\end{enumerate}
\end{prop}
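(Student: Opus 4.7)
The plan is to apply Lemma~\ref{stringmods} to enumerate the $\Omega$-orbits in the exceptional tube $\mathfrak{C}$ whose modules have stable endomorphism ring isomorphic to $k$, and then to determine $R(\Lambda,V)$ orbit-by-orbit using two tools: the invariance $R(\Lambda,V) \cong R(\Lambda,\Omega V)$ from Theorem~\ref{isomk}(iii) (valid since $\Lambda$ is symmetric, hence Frobenius), and the tangent-space bound from the remark after Theorem~\ref{isomk}, which says $R(\Lambda,V)$ is a quotient of $k[[t_1,\ldots,t_r]]$ for $r = \dim_k \Ext^1_\Lambda(V,V)$.

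By Lemma~\ref{stringmods}(i), when $e \geq 2$ the modules in $\mathfrak{C}$ with stable endomorphism ring $\cong k$ partition into exactly $e$ distinct $\Omega$-orbits, represented by $S_{e-1}$, the uniserial strings $M(\alpha_{e-2}\cdots\alpha_i)$ for $1 \leq i \leq e-2$, and the longer string $M_0 := M(\alpha_{e-2}\cdots\alpha_1\alpha_e\delta^{-1})$; when $e=1$, Lemma~\ref{stringmods}(ii) gives the single orbit of $M(\alpha)$. Since $R(\Lambda,V)$ is constant along $\Omega$-orbits, one representative per orbit suffices. For the ``short'' representatives $S_{e-1}$ and each $M(\alpha_{e-2}\cdots\alpha_i)$, the plan is to compute $\Ext^1_\Lambda(V,V)$ via the self-injective identification $\Ext^1_\Lambda(V,V) \cong \underline{\Hom}_\Lambda(\Omega V, V)$, reading off $\Omega V$ from the minimal projective cover of the string. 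A direct comparison of tops and socles should force $\underline{\Hom}_\Lambda(\Omega V, V) = 0$, yielding $R(\Lambda,V) \cong k$ for these $e-1$ orbits.

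The heart of the proof is the remaining orbit: that of $M_0$ when $e \geq 2$, and of $M(\alpha)$ when $e=1$. For these, I expect $\dim_k \Ext^1_\Lambda(V,V) = 1$, so $R(\Lambda,V)$ is a quotient of $k[[t]]$. To show the quotient is trivial, I would exhibit an explicit $k[[t]]\Lambda$-module $\widetilde{V}$, free of rank $e+1$ over $k[[t]]$, whose reduction modulo $t$ recovers $V$ and whose reductions modulo $t^n$ form a strictly increasing family of non-equivalent deformations. The natural construction draws on Section~\ref{bandmodules}: $V$ and the band modules $B(1,\lambda)$ share the common $k$-dimension $e+1$, and $V$ arises as the degeneration of the family $\{B(1,\lambda)\}_{\lambda \in k^*}$. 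Concretely, one builds $\widetilde{V}$ by imitating the band representation while scaling the $\delta$-action by the formal parameter $t$, so that specialization $t \mapsto \lambda \in k^*$ recovers $B(1,\lambda)$ while reduction $t \mapsto 0$ recovers $V$.

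The main anticipated obstacle is the explicit verification that $\widetilde{V}$ has the required properties: one must check it respects the relations $I$ defining $\Lambda$, that its reduction modulo $t$ is $V$, and crucially that no equivalence identifies the deformations $\widetilde{V} \otimes_{k[[t]]} k[[t]]/(t^n)$ for distinct $n$. This last point forces $R(\Lambda,V)$ to surject onto every $k[[t]]/(t^n)$, and should follow from reusing the endomorphism and Hom computations from Section~\ref{bandmodules} to rule out hidden isomorphisms among the truncations, ultimately giving $R(\Lambda,V) \cong k[[t]]$.
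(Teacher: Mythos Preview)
Your plan is essentially the paper's: invoke Lemma~\ref{stringmods} to list the $\Omega$-orbits, compute $\Ext^1_\Lambda(V,V)\cong\underline{\Hom}_\Lambda(\Omega V,V)$ for each representative by comparing tops and composition factors, and for the single orbit with one-dimensional $\Ext^1$ exhibit an explicit one-parameter lift over $k[[t]]$ deforming the string toward the band family. Two small corrections are worth noting.

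First, for $e\geq 2$ your proposed lift---scaling the $\delta$-action by $t$ in the band representation---reduces at $t=0$ to the string $M(\alpha_{e-1}\cdots\alpha_1\alpha_e)$, which is $\Omega(M_0)$ rather than $M_0=M(\alpha_{e-2}\cdots\alpha_1\alpha_e\delta^{-1})$ itself, since in $M_0$ the arrow $\delta$ still acts while $\alpha_{e-1}$ does not. The paper scales $\alpha_{e-1}$ by $t$ instead, obtaining a lift of $M_0$ directly. This discrepancy is harmless because $R(\Lambda,M_0)\cong R(\Lambda,\Omega M_0)$, but you should either switch the orbit representative to $\Omega(M_0)$ or scale $\alpha_{e-1}$ rather than $\delta$. (For $e=1$ your description is correct as stated and matches the paper.)

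Second, your verification that the quotient of $k[[t]]$ is trivial is more work than needed. You do not have to show the truncations $\widetilde{V}/t^n\widetilde{V}$ are pairwise non-isomorphic for all $n$; it suffices that $\widetilde{V}/t^2\widetilde{V}$ is a non-trivial lift over $k[[t]]/(t^2)$. Indeed, the lift $\widetilde{V}$ over $k[[t]]$ corresponds to a local homomorphism $R(\Lambda,V)\to k[[t]]$, and non-triviality modulo $t^2$ forces this map to be surjective onto $k[[t]]/(t^2)$, hence to send the maximal ideal onto $(t)$. Since $R(\Lambda,V)$ is already a quotient of $k[[t]]$, a surjection $R(\Lambda,V)\twoheadrightarrow k[[t]]$ forces $R(\Lambda,V)\cong k[[t]]$.
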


\begin{pf}

First, consider $e \geq 2$. We recall by Lemma \ref{stringmods} that the modules with stable endomorphism ring isomorphic to $k$ are the string modules $M(1_{e-1}),$ denoted by $S_{e-1}$, $M(\alpha_{e-2} \cdots \alpha_i)$ for $1 \leq i \leq e-2$, and $M(\alpha_{e-2} \cdots \alpha_1 \alpha_e \delta^{-1})$, along with their $\Omega$-translates. 

Consider the simple module $S_{e-1}$. Recall $\text{Ext}_\Lambda^1(M,M) \cong \text{\underline{Hom}}_\Lambda(\Omega(M),M)$. Since $\Omega(S_{e-1}) = M(\alpha_{e-2} \alpha_{e-3} \ldots \alpha_1 \alpha_{e} \ldots \alpha_1 \alpha_e)$ it follows that $\Omega(S_{e-1})/\textrm{rad}(\Omega(S_{e-1})) \cong S_e \not\cong S_{e-1}$, and therefore $\text{Hom}_\Lambda(\Omega(S_{e-1}),S_{e-1})=0$. Therefore, $\text{Ext}_\Lambda^1(S_{e-1},S_{e-1}) \cong  \text{\underline{Hom}}_\Lambda(\Omega(S_{e-1}),S_{e-1}) = 0$, which implies $R(\Lambda, S_{e-1}) \cong k$.

Let $V_i = M(\alpha_{e-2} \alpha_{e-3} \ldots \alpha_{i})$ for any $i \in \{1, \ldots, e-2\}$. Then $\Omega(V_i) = M(\alpha_{i-1} \ldots \alpha_1 \alpha_e \ldots \alpha_1 \alpha_e)$. Since $\Omega(V_i)/\textrm{rad}(\Omega(V_i)) \cong S_e$, which does not appear as a composition factor of $V_i$, it follows that $\text{Hom}_\Lambda(\Omega(V_i),V_i)=0$. Therefore $\text{Ext}_\Lambda^1(V_i,V_i) \cong \text{\underline{Hom}}_\Lambda(\Omega(V_i),V_i) = 0$, which implies $R(\Lambda, V_i) \cong k$ for all $1 \leq i \leq e-2$.

Let $W = M(\alpha_{e-2}\alpha_{e-3} \ldots \alpha_1 \alpha_e \delta^{-1}).$ Then $\Omega(W) = M(\alpha_{e-1}\alpha_{e-2} \ldots \alpha_1 \alpha_e)$. In this case, it follows $\text{Ext}_\Lambda^1(W,W) \cong \text{\underline{Hom}}_\Lambda(\Omega(W),W) \cong k$, as the map which sends $\Omega(W)/\textrm{rad}(\Omega(W)) \cong S_e$ to $\textrm{soc}(W) \cong S_e$ does not factor through a projective and generates $\text{\underline{Hom}}_\Lambda(\Omega(W),W)$. Therefore, $R(\Lambda, W) \cong k[[t]]/J$ for some ideal $J$. To show $R(\Lambda, W) \cong k[[t]]$, it is enough to prove $W$ has a lift $M$ over $k[[t]]$ such that $M/t^2M$ is a non-trivial lift over $k[[t]]/t^2$.  To do so, define $M$ to be the free $k[[t]]$-module of rank $e+1$ where the arrows of $Q_e$ act via the following matrices: $\alpha_{i} = \begin{cases}  E_{i+1,i} & 1\leq i \leq e-2 \\ tE_{e,e-1} & i=e-1 \\  E_{1,e}  & i=e,  \end{cases}$ and  $\delta = E_{e+1, e}$, where $E_{j,i}$ denotes the matrix sending the $i$th basis element to the $j$th, and all other basis elements to $0$. $M$ is a $k[[t]] \otimes \Lambda$-module, which is free as a $k[[t]]$-module, $M/tM \cong W$, and $M/t^2M$ is a non-trivial lift of $W$ over $k[[t]]/t^2$. Therefore $R(\Lambda, W) \cong k[[t]]$.

Now consider $e=1.$ Again by Lemma \ref{stringmods}, the modules with stable endomorphism ring isomorphic to $k$ are the string modules $M(\alpha)$ and $M(\delta) = \Omega((\alpha))$. We then have
 $\text{Ext}_\Lambda^1(M(\alpha),M(\alpha)) \cong \text{\underline{Hom}}_\Lambda(\Omega(M(\alpha)),M(\alpha)) \cong k$, since the map sending $M(\delta)/\textrm{rad}(M(\delta)) \cong S_1$ to $\textrm{soc}(M(\alpha)) \cong S_1$ generates $\underline{\text{Hom}}_\Lambda(M(\delta)),M(\alpha))$. 

Therefore, $R(\Lambda, M(\alpha)) \cong k[[t]]/J$ for some ideal $J$. To show $R(\Lambda, (\alpha)) \cong k[[t]]$, define $L$ to be the free $k[[t]]$-module of rank $2$ where the arrows of $Q_e$ act via the following matrices: $\alpha = E_{2,1}$ and  $\delta = tE_{2, 1}$. $L$ is a $k[[t]] \otimes \Lambda$-module, which is free as a $k[[t]]$-module, $L/tL \cong M(\alpha)$, and $L/t^2L$ is a non-trivial lift of $M(\alpha)$ over $k[[t]]/t^2$. Therefore $R(\Lambda, M(\alpha)) \cong k[[t]]$.

\end{pf}

\begin{prop} \label{nonperiodic}
Let $\mathfrak{C}$ be the non-periodic component of the stable Auslander-Reiten quiver of $\Lambda$. Then there are precisely $e$ $\Omega$-orbits of modules in $\mathfrak{C}$, and all modules in these $\Omega$-orbits have stable endomorphism ring isomorphic to $k$.

\begin{enumerate}[(i)]

\item If $e \geq 2$, then $R(\Lambda,V) \cong k$ for the module belonging to $e-2$ of these $\Omega$-orbits, and $R(\Lambda,V) \cong k[[t]]/(t^2)$ for the modules belonging to the remaining 2 $\Omega$-orbits.

\item If $e = 1$, then $R(\Lambda,V) \cong k[[t_1,t_2]]/(t_1^2-t_2^2,t_1t_2)$ for every module belonging to $\mathfrak{C}$.
\end{enumerate}
\end{prop}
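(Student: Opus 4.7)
The plan is to reduce to one representative per $\Omega$-orbit using Theorem \ref{isomk}(iii), compute the tangent space $\text{Ext}^1_\Lambda(V,V) \cong \text{\underline{Hom}}_\Lambda(\Omega V, V)$ as in Section \ref{udr}, and then either deduce $R(\Lambda,V) \cong k$ or identify the ring using explicit lifts. By Lemma \ref{nonperiodicmods}, every module in $\mathfrak{C}$ has $\text{\underline{End}}_\Lambda \cong k$, so each admits a universal deformation ring; for $e \geq 2$ the $e$ orbits are represented by $S_e$ and the uniserial modules $V_i := M(\alpha_{e-1}\alpha_{e-2}\cdots\alpha_i)$ for $1 \leq i \leq e-1$, while for $e = 1$ the sole representative is $S_1$.

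For $V_i$ with $2 \leq i \leq e-1$, I would observe that $P_i$ is uniserial (since one endpoint of edge $i$ has multiplicity one, collapsing that arm), so $\Omega(V_i) = \ker(P_i \twoheadrightarrow V_i)$ is uniserial and cyclic, generated by an element at vertex $1$ --- namely the element of $P_i$ reached by $\alpha_e$ applied just past the socle $S_e$ of $V_i$. Since $V_i$ vanishes at vertex $1$, this generator must map to zero, giving $\text{\underline{Hom}}_\Lambda(\Omega V_i, V_i) = 0$ and hence $R(\Lambda, V_i) \cong k$ for these $e-2$ orbits. For $V_1$ and $S_e$, I would first compute $\text{\underline{Hom}}_\Lambda(\Omega V, V) \cong k$. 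For $V_1$ the generating homomorphism $f$ sends the top $u_1$ of $\Omega(V_1)$ to the top $w_1$ of $V_1$; ruling out factorization through any projective $P_j$ uses that the socle of $\Omega(V_1)$ sits at vertex $1$, so the required identity $\alpha_1\, g(u_{e+1}) = 0$ forces $g(u_1)$ into $\text{Rad}(P_j)$, after which $h \circ g$ cannot reach $w_1$. For $S_e$, the module $\Omega(S_e) = \text{Rad}(P_e)$ has top $S_1 \oplus S_e$, and the nonzero hom to $S_e$ sends $b_{e,3} = \delta(b_{e,1})$ to a generator; a factorization through $P_e$ would require $g(b_{e,3})$ to have nonzero $b_{e,1}$-component, but comparing $\delta\, g(b_{e,3})$ to $g(b_{e,4}) \in \text{Soc}(P_e)$ forces that coefficient to vanish. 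With $\text{Ext}^1 \cong k$, the ring $R(\Lambda,V)$ is a quotient of $k[[t]]$. To obtain $R \cong k[[t]]/(t^2)$, I would exhibit nontrivial lifts over $k[[t]]/(t^2)$ --- for $V_1$, the rank-$e$ free module with $\alpha_j(m_j) = m_{j+1}$ for $1 \leq j \leq e-1$, $\alpha_e(m_e) = t m_1$, and $\delta = 0$; for $S_e$, the rank-$1$ module at vertex $e$ with $\delta = t$ --- and then show neither extends to $k[[t]]/(t^3)$. In the $V_1$ extension attempt the no-mixing relation $\delta \alpha_{e-1} = 0$ forces $\delta$ on $m_e$ to vanish, after which the commutativity relation $(\alpha_{e-1}\alpha_{e-2}\cdots \alpha_1 \alpha_e)^2 = \delta^2$ forces the scalar $b$ with $\alpha_e(m_e) = b m_1$ to satisfy $b^2 = 0$; modulo $t^3$ this forces $b \in (t^2)$, incompatible with $b \equiv t \pmod{t^2}$. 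An analogous argument controls the $S_e$ case.

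For part (ii) with $e = 1$, the single $\Omega$-orbit is represented by $S_1$. Here $\Omega(S_1) = \text{Rad}(P_1)$ has a two-dimensional top $S_1 \oplus S_1$ generated by the heads of the $\alpha$- and $\delta$-arms; a socle analysis analogous to the $S_e$ case above shows that both generators of $\Hom_\Lambda(\Omega S_1, S_1) \cong k^2$ remain nonzero in the stable category, giving $\text{Ext}^1_\Lambda(S_1,S_1) \cong k^2$ and therefore $R(\Lambda,S_1)$ as a quotient of $k[[t_1, t_2]]$. I would then construct a lift over $R := k[[t_1, t_2]]/(t_1^2 - t_2^2,\, t_1 t_2)$ as the rank-$1$ free $R$-module at vertex $1$ with $\alpha$ acting by $t_1$ and $\delta$ by $t_2$. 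The Brauer graph relations translate precisely: $\alpha\delta = \delta\alpha = 0$ corresponds to $t_1 t_2 = 0$, the commutativity relation $\alpha^2 = \delta^2$ corresponds to $t_1^2 - t_2^2 = 0$, and the socle relations $\alpha^3 = \delta^3 = 0$ are consequences of these. Universality would follow from the observation that $\End_\Lambda(S_1) = k$ together with the rank-$1$ constraint forces every lift of $S_1$ over any $R' \in \hat{\mathcal{C}}$ to be a rank-one free $R'$-module on which $\alpha$ and $\delta$ act by elements $f, g \in \mathfrak{m}_{R'}$ satisfying precisely $fg = 0$ and $f^2 = g^2$; the assignment $t_1 \mapsto f,\ t_2 \mapsto g$ is then the unique local ring homomorphism $R \to R'$ producing that lift. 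The main obstacle throughout is the obstruction step --- verifying that the ideals identified are the full defining ideals, so that in the $V_1$ and $S_e$ cases no higher-order extension of the first-order deformation survives, and in the $e = 1$ case the two relations $(t_1 t_2,\, t_1^2 - t_2^2)$ really do suffice.
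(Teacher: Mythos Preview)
Your proposal is correct and follows essentially the same route as the paper: pick the representatives $S_e$ and $V_i=M(\alpha_{e-1}\cdots\alpha_i)$, compute $\Ext^1_\Lambda(V,V)\cong\underline{\Hom}_\Lambda(\Omega V,V)$, deduce $R(\Lambda,V)\cong k$ when this vanishes, and otherwise build the explicit lift over $k[[t]]/(t^2)$ (resp.\ $k[[t_1,t_2]]/(t_1^2-t_2^2,t_1t_2)$) and obstruct further extension via the commutativity relation $(\alpha_{e-1}\cdots\alpha_1\alpha_e)^2=\delta^2$. Your description of $\Omega(S_e)$ as having top $S_1\oplus S_e$ is in fact more accurate than the paper's shorthand $\Omega(S_e)/\mathrm{Rad}\cong S_e$, though the conclusion $\underline{\Hom}(\Omega(S_e),S_e)\cong k$ is the same either way.

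The one genuine methodological difference is in part (ii). The paper argues by obstruction: it exhibits the lift $L$ over $R_0=k[[t_1,t_2]]/(t_1^2-t_2^2,t_1t_2)$, checks that $L/t_1L$ and $L/t_2L$ are non-isomorphic lifts over the dual numbers (so the classifying map $R(\Lambda,S_1)\to R_0$ is surjective on cotangent spaces), and then shows $L$ cannot extend to any $k[[t_1,t_2]]/J'$ with $J'\subsetneq(t_1^2-t_2^2,t_1t_2)$. You instead give a direct functor-of-points argument: every lift of $S_1$ over $R'$ is free of rank one, so the $\Lambda$-action is a pair $(f,g)\in\mathfrak m_{R'}^2$ subject exactly to $fg=0$ and $f^2=g^2$, whence $\mathrm{Def}(S_1,R')\cong\Hom_{\hat{\mathcal C}}(R_0,R')$ naturally. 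Your argument is cleaner and proves representability in one stroke; the paper's obstruction argument is more in the spirit of the $e\geq 2$ cases and generalizes more readily when the module is not one-dimensional.
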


\begin{pf}

In Lemma \ref{nonperiodicmods} we established the modules with stable endomorphism ring isomorphic to $k$.

Assume $e > 1$. Consider the simple module $S_{e}$.  Then $\text{Ext}_\Lambda^1(S_{e},S_{e}) \cong  \text{\underline{Hom}}_\Lambda(\Omega(S_{e}),S_{e}) \cong k$, since $\Omega(S_{e})/\textrm{rad}(\Omega(S_{e})) \cong S_e $, and the projection of $\Omega(S_{e})$ onto $S_e$ generates $\underline{\text{Hom}}_\Lambda(\Omega(S_{e}),S_{e})$.

Therefore, $R(\Lambda, S_e) \cong k[[t]]/J$ for some ideal $J$. To show $R(\Lambda, S_e) \cong k[[t]]/(t^2)$, define $M$ to be the free $k[[t]]/(t^2)$-module of rank $1$ where the arrows of $Q_e$ act via the following matrices: $\alpha_i = 0$ for $1 \leq i \leq e$ and $\delta = [t]$. $M$ is a $k[[t]]/(t^2) \otimes \Lambda$-module and $M/tM \cong S_e$.

Suppose now that $R(\Lambda, S_e) \not\cong k[[t]]/(t^2)$. Then there must exist a non-trivial lift of $S_e$ over a ring of the form $k[[t]]/J$ for some ideal $J \subset (t^2)$. In particular, there must be a lift $M'$ to  $k[[t]]/(t^3)$ where $M'/t^2M' \cong M$. For this to be true, the arrows of $Q_e$ must act on $M'$ via matrices which reduce to the matrices determining the lift $M$ modulo the ideal $(t^2)$. Additionally, every composition factor of $M'$ as a $\Lambda$-module is isomorphic to $S_e$. Therefore, the arrows of $Q_e$ act on $M'$ via the following matrices: $\alpha_i = 0$ for $1 \leq i \leq e$ and $\delta = t+d$ for some $d \in (t^2)$. 

Since $M'$ is a $k[[t]]/(t^3) \otimes \Lambda$-module, the above matrices for $M'$ must satisfy the relation \\$(\alpha_{e-1} \alpha_{e-2} \ldots \alpha_1 \alpha_e)^2 \equiv \delta^2$ mod $(t^3)$.
But since $\alpha_i =0$ for all $1 \leq i \leq e$, it follows that $0 \equiv \delta^2 = (t+d)^2 = t^2+2dt+d^2 \equiv t^2$ mod $(t^3)$, since $d \in (t^2)$. This gives $t^2 \in (t^3)$, a contradiction, and therefore,  $R(\Lambda, S_e) \cong k[[t]]/(t^2)$.

Let $V=M(\alpha_{e-1} \alpha_{e-2} \ldots \alpha_1)$. Then  $\text{Ext}_\Lambda^1(V,V) \cong  \text{\underline{Hom}}_\Lambda(\Omega(V),V) \cong k$, since the natural projection of $\Omega(V) = M(\alpha_e \alpha_{e-1} \ldots \alpha_1)$ onto $V$ generates $\underline{\text{Hom}}_\Lambda(\Omega(V),V)$.

Therefore, $R(\Lambda, V) \cong k[[t]]/J$ for some ideal $J$. To show $R(\Lambda, V) \cong k[[t]]/(t^2)$, define $N$ to be the free $k[[t]]/(t^2)$-module of rank $e$ where the arrows of $Q_e$ act via the following matrices: $\alpha_i = E_{i+1,i}$ for $1 \leq i \leq e-1$, $\alpha_e = tE_{1,e}$ and $\delta = 0$. $N$ is a $k[[t]]/(t^2) \otimes \Lambda$-module and $N/tN \cong V$. This is the Universal deformation of $V$, which is proven similarly to the previous case for $S_{e-1}$. Therefore $R(\Lambda, V) \cong k[[t]]/(t^2)$.

Let $W_i = M(\alpha_{e-1} \alpha_{e-2} \ldots \alpha_i)$ for any $1 \leq i \leq e-1$. Then $\text{Ext}_\Lambda^1(W_i,W_i) \cong  \text{\underline{Hom}}_\Lambda(\Omega(W_i),W_i) = 0$, since $\Omega(W_i)/\textrm{rad}(\Omega(W_i)) \cong S_1,$ which is not a composition factor of $W_i$. Therefore $R(\Lambda, V) \cong k$ for all $1 \leq i \leq e-1$.

Now assume $e=1$ and consider the simple module $S_1$. It follows that $\Omega(S_1) \cong M(\alpha^{-1} \delta)$, and $M(\alpha^{-1} \delta)/\textrm{rad}(M(\alpha^{-1} \delta)) \cong S_1 \oplus S_1$. Thus, there are two linearly independent maps from $M(\alpha^{-1} \delta)$ to $S_1$ given by the natural projections of the simple summands of $M(\alpha^{-1} \delta)/\textrm{rad}(M(\alpha^{-1} \delta))$ onto $S_1$, neither of which factor through a projective. Therefore, $dim(\text{Ext}_\Lambda^1(S_{1},S_{1})) = 2$, giving that $R(\Lambda, S_1) \cong k[[t_1,t_2]]/J$ for some ideal $J$.

To prove $R(\Lambda, S_1) \cong k[[t_1,t_2]]/(t_1^2-t_2^2, t_1t_2)$, define $L$ to be a free $k[[t_1, t_2]]/(t_1^2-t_2^2, t_1t_2)$-module of rank $1$ where the arrows of $Q_e$ act via the following matrices: $\alpha = [t_1], \delta=[t_2]$. $L$ is a $k[[t_1, t_2]]/(t_1^2-t_2^2, t_1t_2) \otimes \Lambda$-module and $L/(t_1,t_2)L \cong S_1$. Furthermore, $L/t_1L$ and $L/t_2L$ are two non-trivial lifts of $S_1$ over the dual numbers. Additionally, they are non-isomorphic, as $L/t_1L \cong M(\delta)$ and $L/t_2L \cong M(\alpha)$ as $k\Lambda$-modules.

Suppose now that $R(\Lambda, S_1) \not\cong k[[t_1,t_2]]/(t_1^2-t_2^2, t_1t_2)$. Then there must exist a non-trivial lift of $S_1$ over a ring of the form $k[[t_1,t_2]]/J$ for some ideal $J \subset (t_1^2-t_2^2, t_1t_2)$. In particular, there must be a lift $L'$ to  $k[[t_1,t_2]]/J'$ where $(t_1,t_2)(t_1^2-t_2^2, t_1t_2) \subseteq J'$ and $L'/(t_1^2-t_2^2, t_1t_2)L' \cong L$. This implies that the arrows of $Q_e$ must act on $L'$ via matrices which reduce to the matrices determining the lift $L$ modulo the ideal $(t_1^2-t_2^2, t_1t_2)$. Therefore, the arrows of $Q_e$ act on $L'$ via the following matrices: $\alpha = [t_1+a]$ and $\delta = [t_2+d]$ for some $a,d \in (t_1^2-t_2^2, t_1t_2)$. 

Since $L'$ is a $k[[t_1,t_2]]/J' \otimes \Lambda$-module, the above matrices must satisfy the relations $\alpha \delta \equiv 0$ mod $J'$ and $\alpha^2 - \delta^2 \equiv 0$ mod $J'$. The first relations gives $0 \equiv (t_1+a)(t_2+d) = t_1t_2+t_1d+t_2a+ad \equiv t_1t_2$ mod $J'$, since $t_1d, t_2a, ad \in J'$. The second relation gives $0 \equiv (t_1+a)^2 - (t_2+d)^2 \equiv t_1^2-t_2^2$ mod $J'$ since $t_1a, a^2, t_2d, d^2 \in J'$. Therefore $(t_1^2-t_2^2, t_1t_2) \subset J'$, which gives a contradiction. Therefore, $R(\Lambda, S_1) \cong k[[t_1,t_2]]/(t_1^2-t_2^2, t_1t_2)$ .

\end{pf}

\begin{prop} \label{homogeneous}
Let $\mathfrak{C}$ be a homogeneous tube of rank one of the stable Auslander-Reiten quiver of $\Lambda$. Then there is at most 1 $\Omega$-orbit of modules which have stable endomorphism ring isomorphic to $k$, and $R(\Lambda,V) \cong k[[t]]$ for all modules belonging to this $\Omega$-orbit.

\end{prop}

\begin{pf} 

The modules in the homogeneous tubes of rank one are all of the form $B(n,\lambda)$ for $n \in \mathbb{Z}^+$ and $\lambda \in k^*$. By Lemma \ref{bandstablelemma}, ${\underline{\text{End}}}_{\Lambda} (B(n,\lambda)) \cong k$ if and only if $n=1$ and $\lambda^2\neq -1$. Therefore, we now consider $(B(1,\lambda))$ with $\lambda^2\neq -1$. 

By Lemma \ref{bandextlemma}, $\displaystyle \Omega(B(1,\lambda))=B(1,-{\lambda}^{-1})$. It can be easily shown that $\text{Ext}_\Lambda^1(B(1,\lambda),B(1,\lambda)) \cong \underline{\text{Hom}}_\Lambda(B(1,-{\lambda}^{-1}),B(1,\lambda)) \cong k$, as $\text{Hom}_\Lambda(B(1,-{\lambda}^{-1}),B(1,\lambda)) \cong k$ and the map spanning this space will not factor through a projective using a similar argument to that used in Lemma \ref{bandstablelemma} for the case when $n=1$.

Therefore, $R(\Lambda, B(1,\lambda)) \cong k[[t]]/J$ for some ideal $J$. To show $R(\Lambda, B(1,\lambda)) \cong k[[t]]$, define a basis $\left\{c_i\right\}$ of $B(1,\lambda)$ as in Lemma \ref{bandstablelemma}. Define $L$ to be the free $k[[t]]$-module of rank $2$ where the arrows of $kQ_e/I$ act via the following matrices: $\alpha_i = E_{i+1,i}$ for all $1 < i< e-1$, $\alpha_{e-1} = E_{e+1,e-1}$, $\alpha_e = E_{1,e}$,  and  $\delta = E_{(t+\lambda)(e+1), e}$ ,where $E_{j,i}$ denotes the matrix sending $c_i$ to the $c_j$, and all other basis elements to $0$. $L$ is a $k[[t]] \otimes \Lambda$-module, which is free as a $k[[t]]$-module, $L/tL \cong B(1,\lambda)$, and $L/t^2L$ is a non-trivial lift of $B(1,\lambda)$ over $k[[t]]/t^2$. Therefore $R(\Lambda, M(\alpha)) \cong k[[t]]$.

\end{pf}

Thus, Theorem \ref{mainthm} is proven.
\appendix

\section{String and Band Modules}
\label{appendix}

Let $k$ be an algebraically closed field, and let $\Lambda = kQ/I$ be a generalized Brauer tree algebra. Then $\bar{\Lambda}=\Lambda/\text{soc}(\Lambda)=kQ/J$ is a string algebra and all non-projective indecomposable $\Lambda$-modules are inflated from string and band modules for this associated string algebra. 

Given an arrow $\beta$ of $Q$, let $\beta^{-1}$ denote a formal inverse of $\beta$, and define $s(\beta^{-1}) = e(\beta)$, $e(\beta^{-1}) = s(\beta)$, and $(\beta^{-1})^{-1} = \beta$. 
 A word of length $n \geq 1$ is defined to be a sequence $w=w_1 w_2 \cdots w_n$ where each $w_i$ is either an arrow or a formal inverse, and where $s(w_i) = e(w_{i+1})$ for $1 \leq i \leq n-1$. Define $s(w) = s(w_n)$, $e(w) = e(w_1)$, and $w^{-1} = w_n ^{-1} \cdots w_2 ^{-1} w_1 ^{-1}$. We also associate an emptry word $1_u$ of length $0$ to each vertex where $e(1_u)=s(1_u)=u$ and $(1_u)^{-1}=1_u.$
 
\begin{definition}
Define an equivalence relation $\sim$ on the set of all words by $w \sim w^{\prime}$ if $w=w^{\prime}$ or $w^{-1} = w^{\prime}$. Define a second equivalence relation $\sim_b$ on the set of all words $w$ of length at least $1$ which satisfy $s(w) = e(w)$ by $w \sim_b w^{\prime}$ if either $w$ or $w^{-1}$ is obtained from $w^{\prime}$ by a cyclic rotation.

Let $\mathcal{S}$ be a complete set of representative words $w = w_1 w_2 \cdots w_n$ under the relation $\sim$ such that either $n = 0$, i.e. $w$ is an empty word, or $w_i \neq w_{i+1} ^{-1}$ for $1 \leq i \leq n-1$, and no subpath of $w$ or $w^{-1}$ belongs to $J$. The elements of $\mathcal{S}$ are called strings.

Let $\mathcal{B}$ be a complete set of representative words $w = w_1 w_2 \cdots w_n$ with $n \geq 1$ and $s(w) = e(w)$ under the relation $\sim_b$ such that $w_i \neq w_{i+1}^{-1}$ for $1 \leq i \leq n-1$, $w_n \neq w_1 ^{-1}$, $w$ is not a power of a smaller word, and no subword of $w^m$ belongs to $J$ for all $m \geq 1$. The elements of $\mathcal{B}$ are called bands.
\end{definition}

If $C=1_u$ for some vertex $u \in Q$, then the associated string module $M(1_u)=S_{u}$ is the simple $\Lambda$-module corresponding to $u$. Otherwise, Let $C = w_1 w_2 \cdots w_n$ or $w= e_i$ be a string of length $n > 0$ and define $v(i)=e(w_{i+1})$ and $v(n) = s(w_n)$. The string module $M(C)$ is defined with the ordered basis $\{b_0, b_1, \ldots , b_n\}$ where the $\Lambda$-action on $Q$ is given on this basis with each vertex $u$, respectively arrow $\beta$, acting as an $(n+1) \times (n+1)$ $X_u$, respectively $X_\beta$, by the following: $X_u$ sends $b_i$ to itself if $v(i)=u$ and $0$ overwise, and $X_\beta$ sends $b_i$ to $b_{i-1}$ if $w_{i+1}=\beta$, to $b_{i+1}$ if $w_i=\beta^{-1}$ and to $0$ otherwise.

In this paper, we consider the algebra $\Lambda = kQ_e/I$ associated to the generalized Brauer tree as described in Section \ref{brauer}. By lemma \ref{stringmods} and lemma \ref{nonperiodicmods}, the only string modules for $\Lambda$ with stable endomorphism ring isomorphic to $k$ are the below string modules and their $\Omega$-orbits:
\begin{itemize}
    \item for all $e\geq 1$, the simple module $S_e$  and the uniserial modules $M(\alpha_{e-1} \alpha_{e-2} \cdots \alpha_i)$ for $1 \leq i \leq e-1,$ 
    \item additionally, when $e>1$, the string modules $S_{e-1}$, $M(\alpha_{e-2}\alpha_{e-3} \cdots \alpha_i)$ for $1\leq i\leq e-2,$ and $M(\alpha_{e-2} \alpha_{e-3} \cdots \alpha_1 \alpha_e \delta^{-1})$,
    \item and for $e = 1$, the string module $M(\alpha)$.
\end{itemize}

In \cite{krause1991maps}, the author describes the homomorphisms with string and band modules. The following summarizes those definitions as used in our paper.

\begin{definition}
Let $M(S)$ and $M(T)$ be string modules for $\bar{\Lambda}$ with $k$ bases 
$\{b_0, \ldots, b_n\}$ and $\{c_0, \ldots, c_m\}$ respectively with the following properties:

\begin{enumerate}[(i)]
\item $S \sim BCD$, where $B$ is a substring which is either of length $0$ or $B = B^{\prime} \tau$ for an arrow $\tau$, and $D$ is a substring which is either of length $0$ or $D = \sigma ^{-1} D^{\prime}$ for an arrow $\sigma$. In other words, $S \sim B^{\prime} \stackrel{\tau}{\longleftarrow} C \stackrel{\sigma}{\longrightarrow} D^{\prime} $.
\item $T \sim ECF$, where $E$ is a substring which is either of length $0$ or $E = E^{\prime} \nu^{-1}$ for an arrow $\nu$, and $F$ is a substring which is either of length $0$ or $F = \mu F^{\prime}$ for an arrow $\mu$. In other words, $T \sim E^{\prime} \stackrel{\nu}{\longrightarrow} C \stackrel{\mu}{\longleftarrow} F^{\prime} .$
\end{enumerate}
Then there exists a canonical $\overline{\Lambda}$-module homomorphism 
$\phi_C : M(S) \twoheadrightarrow M(C) \hookrightarrow M(T)$ which is defined by sending each basis element in $\{b_1, \ldots , b_n\}$ of $S$ either to $0$ or a basis element in $\{ c_0, \ldots, c_m \}$ of $T$ corresponding to the same vertex $u$ of the quiver $kQ_e$.
\end{definition}

There can be several strings $B, D, E$ and $F$ giving valid and linearly independent canonical homomorphisms for a given string $C$, and by \cite{krause1991maps}, every $\bar{\Lambda}-$module homomorphism $\phi: M(S) \rightarrow M(T)$ can be written as a unique linear combination of these canonical homomorphisms for multiple corresponding strings $C$.

For $\Lambda = kQ_e/I$, the only band modules correspond to the word $w=\alpha_{e-1} \alpha_{e-2} \cdots \alpha_1 \alpha_e \delta^{-1}$. In section \ref{bandmodules} we give an explicit description of these band modules as representations of $\Lambda$ (see Figure \ref{fig:BandAsRep}) and we also describe all morphisms as morphisms between representations of $\Lambda$.

\section*{Acknowledgments}
The authors wish to acknowledge Frauke Bleher both for introducing us to this field of study, and for all of her guidance.

\bibliography{brauer_graph_universal.bib}
\bibliographystyle{elsarticle-num}

\end{document}